\numberwithin{equation}{section}
\newtcolorbox{shadedbox}{
  breakable,
  enhanced jigsaw,
  colback=white,
}
\newtheorem{assumption}{Assumption}
\newtheorem{theorem}{Theorem}[section]
\newtheorem{lemma}{Lemma}[section]
\newtheorem{remark}{Remark}[section]
\newcommand{\dx}{\,{\mathrm d}x}
\newcommand{\dt}{\,{\mathrm d}t}
\def\BState{\State\hskip-\ALG@thistlm}
\newcommand{\lc}{\mathrel{\raise2pt\hbox{${\mathop<\limits_{\raise1pt\hbox
{\mbox{$\sim$}}}}$}}}
\newcommand{\gc}{\mathrel{\raise2pt\hbox{${\mathop>\limits_{\raise1pt\hbox
{\mbox{$\sim$}}}}$}}}
\newcommand{\ec}{\mathrel{\raise1pt\hbox{${\mathop=\limits_{\raise2pt\hbox
{\mbox{$\sim$}}}}$}}}
\begin{document}

\title{Stability and convergence of multi-product expansion splitting methods with negative weights for semilinear parabolic equations}
\author[1]{Xianglong Duan}
\author[2]{Chaoyu Quan}
\author[3]{Jiang Yang}
\author[4]{Zijing Zhu}

\date{}
\affil[1]{\small School of Mathematical Sciences, Capital Normal University, Beijing, 100048, People's Republic of China
(\href{mailto:xianglong.duan@cnu.edu.cn}{xianglong.duan@cnu.edu.cn}).}
\affil[2]{\small Division of Mathematics, School of Science and Engineering, The Chinese University of Hong Kong, Shenzhen, 518172, Guangdong, China
(\href{mailto:quanchaoyu@cuhk.edu.cn}{quanchaoyu@cuhk.edu.cn}).}
\affil[3]{\small  Department of Mathematics, SUSTech International Center for Mathematics \& National Center for Applied Mathematics Shenzhen (NCAMS), Southern University of Science and Technology, Shenzhen, People's Republic of China 
(\href{mailto:yangj7@sustech.edu.cn}{yangj7@sustech.edu.cn})}
\affil[4]{\small Department of Mathematics, Southern University of Science and Technology, Shenzhen, 518055, Guangdong, People's Republic of China
(\href{mailto:zhuzijing523@outlook.com}{zhuzijing523@outlook.com}).}

\maketitle

\begin{abstract}
The operator splitting method has been widely used to solve differential equations by splitting the equation into more manageable parts.  
In this work, we resolves a long-standing problem---how to establish the stability of multi-product expansion (MPE) splitting methods with negative weights.
The difficulty occurs because negative weights in high-order MPE method cause the sum of the absolute values of weights larger than one, making standard stability proofs fail.
In particular, we take the semilinear parabolic equation as a typical model and establish the stability of arbitrarily high-order MPE splitting methods with positive time steps but possibly negative weights. Rigorous convergence analysis is subsequently obtained from the stability result. Several numerical experiments validate the stability and accuracy of various high-order MPE splitting methods, highlighting their efficiency and robustness.
\end{abstract}

{\bf Keywords:} Operator splitting method; multi-product expansion;  stability; convergence; semilinear parabolic problem.

{\bf MSC codes:} 
65M12, 65M15

\section{Introduction} \label{section:intro} 
Operator splitting method plays a critical role in approximating solutions of differential equations involving the coupling of multiple physical fields. The core idea is to subdivide the original equations into a series of simpler problems, each corresponding to a different physical or mathematical process. By solving each simpler problem independently within a time step and subsequently recombining them, operator splitting method provides an approximate solution to the original problem (see reviews \cite{mclachlan2002splitting,blanes2024splitting}). This method is particularly advantageous for problems in which multiple processes, such as diffusion, advection, and reaction, interact simultaneously, as operator splitting method allows these processes to be separated and then treated with specialized methods optimized for each.

Consider a general evolutionary problem of the form  
\begin{equation}\label{eq:evolutionary problem}
    \left\{
    \begin{aligned}
       & u'(t)      = A\left(u(t)\right) + B\left(u(t)\right), \quad t\in [0,T],  \\
       & u(0)~\text{given},         
    \end{aligned}
    \right.
\end{equation}
involving (possible unbounded and nonlinear) operators $A: D(A) \subset X \rightarrow X$ and $B: D(B) \subset X \rightarrow X$; throughout, we tacitly assume that the domains $D(A)$ and $D(B)$ are suitably chosen subsets of the underlying Banach space $(X,\|\cdot\|_{X})$  with non-empty intersection $D(A) \cap D(B) \neq \emptyset$. 
The exact solution to \eqref{eq:evolutionary problem} is denoted by 
\begin{equation}
    \begin{aligned}
        u(t) = \mathcal{E}_{A+B}(t)u(0),
    \end{aligned}
\end{equation}
with the evolution operator $\mathcal{E}_{A+B}$. The evolutionary problem \eqref{eq:evolutionary problem} is subdivided into two problems $u'=A(u)$ and $u'=B(u)$, whose solutions correspond to evolution operators $\mathcal{E}_{A}$ and $\mathcal{E}_{B}$, respectively. 
By alternately employing these two evolution operators $\mathcal{E}_{A}$ and $\mathcal{E}_{B}$, a $k$th-order splitting operator $\mathcal{S}^{[k]}$ can be systematically generated. Let $t_n=n\tau$ for $n=0,1,\cdots,N$, where the step size $\tau = T/N$ with $N\geq1$. The corresponding splitting method is written as
\begin{equation}
    \begin{aligned}
        u^{n} = \mathcal{S}^{[k]}(\tau)u^{n-1} \approx u(t_n) = \mathcal{E}_{A+
    B}(\tau)u(t_{n-1}),\quad 1\leq n\leq N
    \end{aligned}
\end{equation}
that approximates the exact solution $u(t_n)$ to \eqref{eq:evolutionary problem}. 
Typically, the splitting operator $\mathcal{S}^{[k]}$ is composed by a single-product of the two evolution operators $\mathcal{E}_{A}$ and $\mathcal{E}_{B}$:
\begin{equation}\label{eq:SPE_scheme1}
    \begin{aligned}
        \mathcal{S}^{[k]}(\tau
        ) = \mathcal{E}_B(b_{m_i}\tau) \mathcal{E}_A(a_{m_i}\tau)\cdots\mathcal{E}_B(b_{1}\tau)\mathcal{E}_A(a_{1}\tau),
    \end{aligned}
\end{equation}
and multi-product expansion (MPE) splitting method is a linear combination of the splitting operators:
\begin{equation}\label{eq:MPE_scheme1}
    \begin{aligned}
        \mathcal{S}^{[k]}(\tau
        ) =\sum_{i=1}^{M}c_i \mathcal{E}_B(b_{i,m_i}\tau) \mathcal{E}_A(a_{i,m_i}\tau)\cdots\mathcal{E}_B(b_{i,1}\tau)\mathcal{E}_A(a_{i,1}\tau),
    \end{aligned}
\end{equation}
where the time step coefficients $a_{i,j},b_{i,j}$ can be real or complex, the weights $c_i$ are usually real and $M$, $m_i$ are some positive integers.

The earliest developments in operator splitting methods began with two splittings: the first-order Lie--Trotter splitting
\begin{equation}
    \begin{aligned}
        \mathcal{S}^{[1],1}(\tau) = \mathcal{E}_A(\tau)\mathcal{E}_B(\tau) \quad\mbox{or}\quad\mathcal{S}^{[1],2}(\tau)=\mathcal{E}_B(\tau)\mathcal{E}_A(\tau),
    \end{aligned}
\end{equation}
and the second-order Strang splitting
\begin{equation}\label{eq:Strang_scheme}
    \begin{aligned}
        \mathcal{S}^{[2],1}(\tau) = \mathcal{E}_A\left(\frac{\tau}{2}\right)\mathcal{E}_B(\tau)\mathcal{E}_A\left(\frac{\tau}{2}\right)\quad\mbox{or}\quad \mathcal{S}^{[2],2}(\tau)=\mathcal{E}_B\left(\frac{\tau}{2}\right)\mathcal{E}_A(\tau)\mathcal{E}_B\left(\frac{\tau}{2}\right).
    \end{aligned}
\end{equation}
Historically, a consensus has emerged regarding the development of splitting techniques based on the Lie product formula \cite{reed1980methods}. However, the precise origins of this formula remain somewhat ambiguous  \cite{chorin1978product,glowinski2017splitting,blanes2024splitting,mclachlan2002splitting}. Trotter later generalized it to self-adjoint linear operators in \cite{trotter1959product}, leading to the first-order Lie--Trotter splitting method. The second-order Strang splitting method is constructed by composing the Lie--Trotter splitting and its adjoint with half time steps. It was first introduced in \cite{strang1968construction} as an alternative approach to solving multidimensional problems with one-dimensional operators. As mentioned in
 \cite{blanes2024splitting}, the Strang splitting has been a popular integrator, as second-order methods often achieve the right balance between accuracy and complexity.
In some practical applications \cite{laskar1989numerical,sussman1992chaotic,laskar2004long}, higher accuracy might be required, whereas the Lie--Trotter and Strang splitting methods, while describing the systems well \cite{glowinski2017splitting}, are limited to first- and second-order accuracy.

One technique to derive high-order (third-order and higher) splitting methods \eqref{eq:SPE_scheme1} is the composition of lower-order splitting methods. For example, one can utilize the Strang splitting as a basic method and impose the symmetry of step coefficients on them to construct high-order splitting methods. In \cite{creutz1989higher}, it was shown that even-order splitting methods can be recursively constructed through compositions of the Strang splitting method. In \cite{yoshida1990construction}, a series of time-symmetric even-order splitting methods with exact time reversibility such as triple jump \cite{creutz1989higher} are derived recursively from the Strang splitting method by leveraging the property that even-order terms in the Baker--Campbell--Hausdorff (BCH) formula can be canceled.  Analogously, one can develop higher-order splitting methods by composing the Lie--Trotter splitting method with its adjoint while leveraging order conditions on step coefficients. These methods have been examined in \cite{cervi2018high}, which provides an overview of several third-order operator splitting methods. Beyond composition, step coefficients can also be determined using various order conditions summarized by Blanes et al. in \cite{blanes2024splitting} to construct higher-order splitting methods.  In addition, a broader range of high-order splitting methods is comprehensively summarized in \cite{blanes2024splitting,mclachlan2002splitting}.  A key advantage of splitting methods is that the resulting algorithms can preserve structure, such as symplecticity for Hamiltonian systems. Despite the ability of splitting methods to achieve arbitrarily high order and preserve structure for certain models, they face a fundamental challenge beyond the second order. As noted in \cite{castella2009splitting,hansen2009high}, high-order splitting methods cannot be applied to time-irreversible or semigroup problems due to the exponentially growing number of operators with unavoidable negative step coefficients \cite{sheng1989solving,suzuki1991general}.  Even for time-reversible systems where backward time integration is not problematic, the exponential growth on the number of force evaluations renders high-order symplectic integrators difficult to be derived and expensive to use, as demonstrated in \cite{chin2011multi}.

Complex step coefficients can be admitted in splitting methods \cite{suzuki1990fractal,bandrauk1991improved}. For parabolic equations, the existence of negative coefficients in high-order splitting methods leads to instability, while high-order splitting methods with complex step coefficients provide a possible way to overcome this order barrier \cite{castella2009splitting,hansen2009high}. To maintain the stability of the heat propagator, complex step coefficients with positive real parts are required. In \cite{hansen2009high}, the composition technique is applied to construct high-order splitting methods with complex step coefficients. However, as shown in \cite{blanes2013optimized}, if one considers complex step coefficients with positive real parts for splitting methods, there are still order barriers for the composition technique based on the Lie--Trotter and Strang splittings. Meanwhile, in \cite{bernier2023symmetric}, a symmetric-conjugate method applied to linear unitary
problems has been developed, which can achieve arbitrarily high order for several parabolic PDEs \cite{castella2009splitting,hansen2009high} because these methods of order higher than two do exist with coefficients having positive real parts.

The MPE splitting \eqref{eq:MPE_scheme1} generalizes the  typical splitting methods via linear combination, which enables the construction of arbitrarily high-order  splitting methods without complex or negative step coefficients. A second-order MPE splitting, first mentioned by Strang in \cite{strang1968construction} and named symmetrically weighted sequential splitting in \cite{csomos2005weighted}:
\begin{equation}\label{eq:sws_scheme}
    \begin{aligned}
        \mathcal{S}^{[2],3}\left(t\right) =  \frac{1}{2}\mathcal{E}_B(\tau)\mathcal{E}_A(\tau)+\frac{1}{2}\mathcal{E}_A(\tau)\mathcal{E}_B(\tau)
    \end{aligned}
\end{equation}
is derived from a linear combination of two Lie--Trotter splitting methods. In \cite{chin2010multi,chin2011multi}, an efficient approach is presented for constructing high-order integrators based on time-symmetric splitting methods (for example, Strang splitting) and Richardson extrapolation.  Generally, MPE splitting methods do not preserve the geometric
properties, since they are linear combinations of splitting methods. In certain cases, they can preserves the geometric properties (e.g., symplecticity for Hamiltonian problems) and achieve arbitrarily high order \cite{blanes1999extrapolation,chan2000extrapolation,blanes2017concise}. However, as rigorously shown by Suzuki and Sheng in \cite{sheng1989solving,suzuki1991general} and further strengthened in \cite{goldman1996n}, regardless of how many operators are subdivided, any MPE splitting method of order higher than two inevitably requires at least one negative step coefficient or weight per operator. Since negative step coefficients lead to instability of some operators, we consider the MPE splitting method with all positive step coefficients, which implies the inevitable existence of negative weights.   In other words, the MPE splitting methods we are concerned with are linear combinations of splitting methods up to order 2, with at least one negative weight.

Despite numerous numerical experiments and practical applications that demonstrate the remarkable performance of high-order MPE splitting methods with all positive step coefficients \cite{schatzman1994higher,chin2011multi,amiranashvili2021additive}, a rigorous analysis of their stability remains lacking. The primary challenge lies in the fact that trigonometric inequalities cannot be directly applied for estimation, as their upper bounds grow exponentially with iterations. In \cite{schatzman2002toward}, the author suggested that the safest approach to designing high-order operator splitting methods involves negative weight coefficients while ensuring the positivity of step coefficients. In the book \cite[Section 1.3]{hundsdorfer2013numerical}, Hundsdorfer and Verwer stated that ``{\em it is not precisely known for what kinds of problems this scheme (a MPE splitting with negative weights) will be stable or unstable}''. The stability analysis of MPE splitting methods with negative weights remains an open problem.

In this work, we establish the stability and consequently sharp convergence for
arbitrarily high-order MPE splitting methods. For simplicity, we consider the semilinear parabolic
equation with periodic boundary conditions as a typical problem. Our investigation addresses two main aspects. Firstly, to establish stability, we reformulate MPE splitting as several components, each formed by the composition of the three evolution operators $\mathcal{E}_A(\tau)$, $\mathcal{E}_B(\tau)$, and $\mathcal{E}_B(\tau)-\mathcal{I}$. By leveraging the stability properties of these evolution operators, we then derive Sobolev bounds for arbitrarily high-order MPE splitting methods. Secondly, we employ Lie derivative calculus and bounds for iterated Lie-commutators \cite{jahnke2000error, lubich2008splitting, thalhammer2012convergence} to analyze consistency. By combining stability and consistency, we rigorously prove convergence by deriving a Sobolev bound for the exact solution to the semilinear parabolic equation. In addition, we perform several numerical experiments to validate the accuracy, stability, and
adaptive time-stepping strategies of these high-order MPE splitting methods, highlighting their efficiency and
robustness through a range of nonlinear semilinear parabolic models.

The paper is organized as follows. In Section \ref{section:Notation and preliminaries}, we introduce some notations and preliminaries. 
Section \ref{section:MPE for rd equation} summarizes existing formulations of MPE splittings and proposes two new fourth-order splittings. Section \ref{section:stability} states the semilinear parabolic equation as a typical equation and provides for the first time a stability result for generic MPE splitting methods. In Section \ref{section:convergence}, the consistency is established through the derivation of the local error expansion for arbitrarily high-order MPE splitting methods, followed by the proof of convergence. In Section \ref{section:numerical experiments}, we conduct several numerical experiments that validate our theoretical results, demonstrating the stability and accuracy of the proposed high-order MPE splitting methods in various test cases. Finally, we conclude the paper in Section \ref{section:conclusions}.

\section{Notations and preliminaries}
\label{section:Notation and preliminaries}

Throughout this paper, we formally define the following notations.  We denote by \( \mathbb{N} = \{ m \in \mathbb{Z} : m \geq 0 \} \) the set of non-negative integers. The compact multi-index notation \( \mu = (\mu_1, \ldots, \mu_d) \in \mathbb{Z}^d \) and the vector notation \( x = (x_1, \ldots, x_d) \in \mathbb{R}^d \) are employed.  Moreover, for \( \alpha, \mu \in \mathbb{Z}^d \), relations such as \( \leq \) are defined componentwise; that is, \( \alpha \leq \mu \) iff \( \alpha_j \leq \mu_j \) for all \( 1 \leq j \leq d \). We set \( |\mu| = \mu_1 + \cdots + \mu_d \) and \( \mu! = \mu_1! \cdots \mu_d! \) for \( \mu \in \mathbb{N}^d \), and \( c\mu = (c \mu_1, \ldots, c \mu_d) \) for \( c \in \mathbb{Z} \) and \( \mu \in \mathbb{Z}^d \). For \( \mu \in \mathbb{N}^d \) and \( x \in \mathbb{R}^d \), we write \( x^\mu = x_1^{\mu_1} \cdots x_d^{\mu_d} \) and \( \partial_x^\mu = \partial_{x_1}^{\mu_1} \cdots \partial_{x_d}^{\mu_d} \) for brevity and further denote by \( \Delta = \partial_{x_1}^2 + \cdots + \partial_{x_d}^2 \) the \( d \)-dimensional Laplace operator. 
 The Sobolev space \( W^{m, p}(\Omega) \) consists of functions whose partial derivatives up to order \( m \geq 1 \) belong to \( L^p(\Omega) \). In particular, the Hilbert spaces with periodic boundary conditions are \( H^0_{\text{per}}(\Omega) = L^2(\Omega) \) and \( H^m_{\text{per}}(\Omega) = W^{m, 2}_{\text{per}}(\Omega) \) for \( m \geq 1 \), with the associated norm given by
 \begin{equation}
\begin{aligned}
    \|u\|_{H^k} = \|u\|_{H^k_{\text{per}}(\Omega)} = \sqrt{\sum_{|\alpha| \leq k} \|\partial_x^\alpha u\|_2^2},
\end{aligned}
 \end{equation}
for brevity.  We make the convention that $|\Omega|^\frac{1}{\infty}=1$ where $|\Omega|$ is the volume of the bounded domain $\Omega$.

Furthermore, for a family of (unbounded nonlinear) operators $\left(F_{\ell}\right)_{j \leq \ell \leq k}$ the product is defined downward (on a suitably chosen domain), that is, we set
\begin{equation}\label{eq:prod}
    \begin{aligned}
        \prod_{\ell=j}^{k} F_{\ell}=F_{k} \cdots F_{j},\quad  \text{if}\quad j \leq k,
    \end{aligned}
\end{equation}
and $\prod_{\ell=j}^{k} F_{\ell}=\mathcal{I}$ if $j>k$ with $\mathcal{I}$ the identity operator.

\section{Some MPE splitting methods}\label{section:MPE for rd equation}
In this section, we present two different constructions of MPE splitting methods for solving the evolutionary problem \eqref{eq:evolutionary problem}. In particular, we summarize some well-known high-order MPE splitting methods and also propose two fourth-order MPE splitting methods.

Using the formal calculus of Lie derivative introduced in \eqref{appendix:lie derivative}, a $k$th-order splitting operator can also be expressed as
\begin{equation}\label{eq:SPE_scheme}
    \begin{aligned}
        \mathcal{S}^{[k]}(t) = \prod_{j=1}^{m} \mathrm{e}^{a_{m+1-j} t D_{A}} \mathrm{e}^{b_{m+1-j} t D_{B}} \approx  \mathcal{E}_{A+B}(t)=\mathrm{e}^{t D_{A+B}}, \quad 0 \leq t \leq T,
    \end{aligned}
\end{equation}
 and a $k$th-order MPE splitting operator can be expressed as
\begin{equation}\label{eq:MPE_scheme}
    \begin{aligned}
        \mathcal{S}^{[k]}(t) = \sum_{i=1}^{M}c_i\prod_{j=1}^{m_i} \mathrm{e}^{a_{i,m_i+1-j} t D_{A}} \mathrm{e}^{b_{i,m_i+1-j} t D_{B}} \approx \mathcal{E}_{A+B}(t)=\mathrm{e}^{t D_{A+B}}, \quad 0 \leq t \leq T,
    \end{aligned}
\end{equation}
where the product is defined downward as \eqref{eq:prod} and $\mathcal{I}$ is the identity operator. Alternatively, the splitting operators \eqref{eq:SPE_scheme} and the MPE splitting operators \eqref{eq:MPE_scheme} can be written as \eqref{eq:SPE_scheme1} and \eqref{eq:MPE_scheme1}, respectively, in the form of the composition of evolution operators. It
should also be noted that, different from \eqref{eq:SPE_scheme1} and \eqref{eq:MPE_scheme1}, the evolution operators in \eqref{eq:SPE_scheme} and  \eqref{eq:MPE_scheme} acts in reversed order (from right to left).

For the evolutionary problem \eqref{eq:evolutionary problem}, we first define the defect operator $\mathcal{D}(t)$ as 
\begin{equation}\label{eq:defect}
    \begin{aligned}
        \mathcal{D}(t) = \mathcal{S}^{[k]}(t) - \mathcal{E}_{A+B}(t) = \sum_{i=1}^{M}c_i\prod_{j=1}^{m_i} \mathrm{e}^{a_{i,m_i+1-j} t D_{A}} \mathrm{e}^{b_{i,m_i+1-j} t D_{B}} -  \mathrm{e}^{t D_{A+B}},\quad  0<t\leq T,
    \end{aligned}
\end{equation}
which is also known as the local splitting error of the MPE splitting operator $\mathcal{S}^{[k]}(t)$ in \eqref{eq:MPE_scheme}. Here  $\mathcal{E}_{A+B}(t)$ is the exact solution operator in \eqref{eq:MPE_scheme}.
If the defect operator satisfies
\begin{equation}\label{eq:local_error_order}
    \begin{aligned}
        \mathcal{D}(t) = \mathcal{S}^{[k]}(t) - \mathcal{E}_{A+B}(t) = O(t^{k+1}),
    \end{aligned}
\end{equation}
the operator splitting method is said to be of order $k$.

Some interesting MPE splitting methods have been proposed and studied in  the literature. 
For example, two third-order splitting operators were proposed in \cite{burstein1970third}: 
\begin{equation}\label{eq:S3_1}
    \begin{aligned}
    \mathcal{S}^{[3],1}\left(t\right) = &\frac{2}{3}\mathrm{e}^{\frac{1}{2}tD_A}\mathrm{e}^{tD_B}\mathrm{e}^{\frac{1}{2}tD_A} + \frac{2}{3}\mathrm{e}^{\frac{1}{2}tD_B}\mathrm{e}^{tD_A}\mathrm{e}^{\frac{1}{2}tD_B} -\frac{1}{6}\mathrm{e}^{tD_B}\mathrm{e}^{tD_A} - \frac{1}{6}\mathrm{e}^{tD_A}\mathrm{e}^{tD_B},
    \end{aligned}
\end{equation}
and 
\begin{equation}\label{eq:S3_2}
    \begin{aligned}
    \mathcal{S}^{[3],2}\left(t\right) =  \frac{9}{8}\mathrm{e}^{\frac{1}{3}tD_B}\mathrm{e}^{\frac{2}{3}tD_A}\mathrm{e}^{\frac{2}{3}tD_B}\mathrm{e}^{\frac{1}{3}tD_A}- \frac{1}{8}\mathrm{e}^{tD_B}\mathrm{e}^{tD_A}.
    \end{aligned}
\end{equation}
Based on Lie--Trotter splitting methods, a fourth-order splitting operator \eqref{eq:S4_1} is mentioned in \cite{amiranashvili2021additive}, where various MPE splitting operators were further explored, tested, and compared: 
\begin{equation}\label{eq:S4_1}
    \begin{aligned}
    \mathcal{S}^{[4],1}\left(t\right) = &\frac{2}{3}\mathrm{e}^{\frac{1}{2}tD_B}\mathrm{e}^{\frac{1}{2}tD_A}\mathrm{e}^{\frac{1}{2}tD_B}\mathrm{e}^{\frac{1}{2}tD_A} + \frac{2}{3}\mathrm{e}^{\frac{1}{2}tD_A}\mathrm{e}^{\frac{1}{2}tD_B}\mathrm{e}^{\frac{1}{2}tD_A}\mathrm{e}^{\frac{1}{2}tD_B} -\frac{1}{6}\mathrm{e}^{tD_A}\mathrm{e}^{tD_B} - \frac{1}{6}\mathrm{e}^{tD_B}\mathrm{e}^{tD_A}.
    \end{aligned}
\end{equation}

Using the order conditions, we can also construct the following two new fourth-order splitting operators.
\begin{equation}\label{eq:S4_2}
    \begin{aligned}
        \mathcal{S}^{[4],2}\left(t\right) = 
        &  \frac{2}{3}\mathrm{e}^{\frac{1}{4}tD_A}\mathrm{e}^{\frac{1}{2}tD_B}\mathrm{e}^{\frac{1}{2}tD_A}\mathrm{e}^{\frac{1}{2}tD_B}\mathrm{e}^{\frac{1}{4}tD_A}   + \frac{2}{3}\mathrm{e}^{\frac{1}{4}tD_B}\mathrm{e}^{\frac{1}{2}tD_A}\mathrm{e}^{\frac{1}{2}tD_B}\mathrm{e}^{tD_A}\mathrm{e}^{\frac{1}{4}tD_B} \\
        &-\frac{1}{6}\mathrm{e}^{\frac{1}{2}tD_A}\mathrm{e}^{tD_B}\mathrm{e}^{\frac{1}{2}tD_A} - \frac{1}{6}\mathrm{e}^{\frac{1}{2}tD_B}\mathrm{e}^{tD_A}\mathrm{e}^{\frac{1}{2}tD_B}, 
    \end{aligned}
\end{equation}
 and
\begin{equation}\label{eq:S4_3}
    \begin{aligned}
\mathcal{S}^{[4],3}\left(t\right) = 
        & \frac{4}{3}\mathrm{e}^{\frac{1}{8}tD_A}\mathrm{e}^{\frac{1}{4}tD_B}\mathrm{e}^{\frac{3}{8}tD_A}\mathrm{e}^{\frac{1}{2}tD_B}\mathrm{e}^{\frac{3}{8}tD_A}\mathrm{e}^{\frac{1}{4}tD_B}\mathrm{e}^{\frac{1}{8}tD_A} + \frac{4}{3}\mathrm{e}^{\frac{1}{8}tD_B}\mathrm{e}^{\frac{1}{4}tD_A}\mathrm{e}^{\frac{3}{8}tD_B}\mathrm{e}^{\frac{1}{2}tD_A}\mathrm{e}^{\frac{3}{8}tD_B}\mathrm{e}^{\frac{1}{4}tD_A}\mathrm{e}^{\frac{1}{8}tD_B}\\      
        & - \frac{5}{6}\mathrm{e}^{\frac{1}{4}tD_A}\mathrm{e}^{\frac{1}{2}tD_B}\mathrm{e}^{\frac{1}{2}tD_A}\mathrm{e}^{\frac{1}{2}tD_B}\mathrm{e}^{\frac{1}{4}tD_A}   - \frac{5}{6}\mathrm{e}^{\frac{1}{4}tD_B}\mathrm{e}^{\frac{1}{2}tD_A}\mathrm{e}^{\frac{1}{2}tD_B}\mathrm{e}^{tD_A}\mathrm{e}^{\frac{1}{4}tD_B}.\\
    \end{aligned}
\end{equation}

Richardson extrapolation is another efficient and convenient approach to construct high-order MPE splitting methods. Using the time reversibility of Strang splitting \( \mathcal{S}_2(t) \) in \eqref{eq:Strang_scheme}, for a given set of $k$ distinct numbers $\{\gamma_i\}_{i=1}^k$, the  $2k$-th order MPE splitting can be constructed \cite{chin2011multi} as follows
\begin{equation}\label{eq:S2k}
    \begin{aligned}
        \mathcal{S}^{[2 k]}\left(t\right): = \sum^{k}_{i=1}c_i \left[\mathcal{S}_2\left(\frac{t}{\gamma_i}\right)\right]^{\gamma_i},\quad \text{with}\quad c_i = \prod^k_{\substack{j=1 \\
 j\neq i}}\frac{\gamma_i^2 }{\gamma_i^2-\gamma_j^2},~k\geq 1.
    \end{aligned}
\end{equation}
The natural sequence $\{\gamma_i: \gamma_i=i\}_{i=1}^k$ produces $2 k$-th order splittings (for example, $k = 2,3,4,5$):
\begin{equation}\label{eq:richardson schemes}
    \begin{aligned}
\mathcal{S}^{[4],4}(t)&:=-\frac{1}{3} \mathcal{S}_{2}(t)+\frac{4}{3} \left[\mathcal{S}_{2}\left(\frac{t}{2}\right)\right]^2,\\
\mathcal{S}^{[6]}(t)&:=\frac{1}{24} \mathcal{S}_{2}(t)-\frac{16}{15} \left[\mathcal{S}_{2}\left(\frac{t}{2}\right)\right]^{2}+\frac{81}{40} \left[\mathcal{S}_{2}\left(\frac{t}{3}\right)\right]^{3},\\
\mathcal{S}^{[8]}(t)&:=-\frac{1}{360} \mathcal{S}_{2}(t)+\frac{16}{45} \left[\mathcal{S}_{2}\left(\frac{t}{2}\right)\right]^{2}-\frac{729}{280} \left[\mathcal{S}_{2}\left(\frac{t}{3}\right)\right]^{3}+\frac{1024}{315} \left[\mathcal{S}_{2}\left(\frac{t}{4}\right)\right]^{4}, \\
\mathcal{S}^{[10]}(t)&:=\frac{1}{8640} \mathcal{S}_{2}(t)-\frac{64}{945} \left[\mathcal{S}_{2}\left(\frac{t}{2}\right)\right]^{2}+\frac{6561}{4480} \left[\mathcal{S}_{2}\left(\frac{t}{3}\right)\right]^{3}
\\
&\quad-\frac{16384}{2835} \left[\mathcal{S}_{2}\left(\frac{t}{4}\right)\right]^{4}+\frac{390625}{72576} \left[\mathcal{S}_{2}\left(\frac{t}{5}\right)\right]^{5}.    
    \end{aligned}
\end{equation}
Note that the above mentioned MPE splittings have positive time steps but possibly negative weights. 

\begin{remark}
 Some other efficient MPE splitting methods  with negative time steps have recently been proposed in \cite{blanes2021novel,blanes2024generalized}. However, we will see in the following content that for these MPE splittings, the stability analysis in this work seems not applicable.
\end{remark}

\section{Stability of MPE splitting methods for semilinear parabolic equations}\label{section:stability}
In this section, we derive one of the main results of this work, the stability
 estimate for semilinear parabolic equations. We introduce the semilinear parabolic equations with periodic boundary conditions.  The analysis begins with several lemmas and theorems establishing the stability of the two evolution operators, followed by a complete stability analysis of arbitrarily high-order MPE splitting operators \eqref{eq:MPE_scheme} which is a highlight of this work. In the following study, we focus on high-order MPE splitting methods with positive time steps but possibly negative weights.

\subsection{Semilinear parabolic equation}
\label{subsection:semilinear equation}
Now, we consider the semilinear parabolic equation with a periodic boundary condition of the form: 
\begin{equation}\label{eq:semilinear equation}
    \left\{
    \begin{aligned}
       & u_{t}      = \Delta u+f(u), &  & x \in \Omega, ~  t \in(0,T],\\
       & u(x,0)  =u^0({x}),   &  & x \in \Omega,
    \end{aligned}
    \right.
\end{equation}
where the bounded spatial domain $\Omega =[-\pi,\pi]^d$ for $d=1,2,3$. The function $f:\mathbb{R}\rightarrow\mathbb{R}$ is nonlinear and $u^0(x)$ is the initial data. To establish the stability of $\mathcal{E}_B(\tau)$, we impose the following assumption on the function $f$:
\begin{assumption}\label{assumption:f'_bounded}
    There exists a positive constant $\kappa$ such that 
\begin{equation}\label{eq:f'_bounded}
    \underset{\xi\in\mathbb{R}}{\sup}|f'(\xi)|\leq \kappa,
\end{equation} 
where $f^\prime$ is the derivative of $f$.
\end{assumption}
\begin{remark}
From the existence and uniqueness theorem for nonlinear ODEs, if \( f \) is only locally Lipschitz, the solutions may not be defined for certain values of \( t \), even if \( f \) is smooth. For instance, the differential equation \( u_t= u^2 \) with initial condition \( u(0) = 1 \) has the solution \( u(t) = 1/(1 - t) \), which is not defined at \( t = 1 \). 
This implies that operator splitting methods can never be stable unconditionally. Therefore, Assumption \ref{assumption:f'_bounded} is reasonable. 
\end{remark}

\begin{remark}
If \eqref{assumption:f'_bounded} is not satisfied, one technique is to truncate $f'$ when it becomes excessively large so that this assumption can be guaranteed (cf. \cite[Eq.~(1.8)]{shen2010numerical} for the truncation technique), which will be further discussed in Section \ref{section:numerical experiments}.
\end{remark}


The initial-boundary value problem is interpreted as an evolutionary problem of the form \eqref{eq:evolutionary problem} by setting the unbounded linear operator \( A(u) = \Delta u \) and the (possible unbounded and nonlinear) operator \( B(u) = f(u) \) for \( u: \Omega \to \mathbb{R} \), with the Banach space \( X = L^p(\Omega) \) for $1\leq p\leq\infty$. Formally, the solution to the semilinear parabolic equation \eqref{eq:semilinear equation} is given by \( u(t) = \mathrm{e}^{t D_{A+B}} u^0 = \mathcal{E}(t) u^0 \). The evolution operator \( \mathcal{E}_A(t) = \mathrm{e}^{t \Delta} \) corresponds to the linear operator \( A \), which generates a semigroup \( (\mathrm{e}^{tA})_{t \geq 0} \) for \( u_t = \Delta u \), while the evolution operator \( \mathcal{E}_B(t) \) corresponds to \( u_t = f( u) \).

For splitting methods with real step coefficients, we recall that the evolution operator \( \mathcal{E}_A(t) \) is  unstable if $t$ is negative. So, it is natural to assume that the step coefficients $a_{i,j}$ in \eqref{eq:MPE_scheme} are positive. 
However, it has been proved theoretically in \cite{sheng1989solving,suzuki1991general,goldman1996n} that negative weights will exist in this case, that is, if \( k > 2 \) and all \( a_{i,j} > 0 \), then \( \min_i c_i < 0 \). Then, we study the MPE splitting operator defined in \eqref{eq:MPE_scheme} with all \( a_{i,j}, b_{i,j} \geq 0 \) and finite \( m_i \). We will later use MPE to refer to MPE splitting methods with all positive step coefficients.

%
%

\subsection{Stability of $\mathcal{E}_A(\tau)$ and $\mathcal{E}_B(\tau)$}\label{subsection: stability for suboperators}

Before analyzing the stability of high-order MPE splitting operators, we first present several lemmas to elucidate the properties of the two evolution operators $\mathcal{E}_A(\tau)$ and $\mathcal{E}_B(\tau)$.

\begin{lemma}[Stability of $\mathcal{E}_A(\tau)$]\label{lemma:E_A_stability}For any $\tau\geq 0 $ and bounded domain $\Omega$ with periodic boundary condition, the following estimates hold:
\begin{enumerate}
    \item[(1)] If $u^0 \in H^k_{\mathrm{per}}(\Omega)$ with $k > 0$, then
    $
    \|\mathcal{E}_A(\tau)u^0\|_{H^k} \leq \|u^0\|_{H^k}.
    $
    
    \item[(2)] If $u^0 \in L^p(\Omega)$ for $1 \leq p < \infty$, then
    $
    \|\mathcal{E}_A(\tau)u^0\|_{p} \leq \|u^0\|_{p}.
    $
    
    \item[(3)] If $u^0 \in L^\infty(\Omega)$, then
    $
    \|\mathcal{E}_A(\tau)u^0\|_{\infty} \leq \|u^0\|_{\infty}.
    $
    \end{enumerate}
\end{lemma}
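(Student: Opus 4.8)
\textbf{Proof plan for Lemma \ref{lemma:E_A_stability}.}

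The plan is to exploit that $\mathcal{E}_A(\tau) = \mathrm{e}^{\tau\Delta}$ is the heat semigroup on the torus, which has an explicit spectral (Fourier) representation and an explicit convolution (heat-kernel) representation; the three claims then follow from elementary properties of each representation. For claim (1), I would expand $u^0 = \sum_{\mu\in\mathbb{Z}^d}\hat u^0_\mu\,\mathrm{e}^{\mathrm{i}\mu\cdot x}$ in the periodic Fourier basis. Then $\mathcal{E}_A(\tau)u^0 = \sum_\mu \mathrm{e}^{-\tau|\mu|^2}\hat u^0_\mu\,\mathrm{e}^{\mathrm{i}\mu\cdot x}$, and since differentiation multiplies the $\mu$-th coefficient by $(\mathrm{i}\mu)^\alpha$, Parseval's identity gives
\begin{equation}
\|\mathcal{E}_A(\tau)u^0\|_{H^k}^2 = \sum_{|\alpha|\le k}\sum_{\mu}|\mu^\alpha|^2\,\mathrm{e}^{-2\tau|\mu|^2}|\hat u^0_\mu|^2 \le \sum_{|\alpha|\le k}\sum_{\mu}|\mu^\alpha|^2|\hat u^0_\mu|^2 = \|u^0\|_{H^k}^2,
\end{equation}
using $\mathrm{e}^{-2\tau|\mu|^2}\le 1$ for $\tau\ge 0$. (One can equally invoke that $\mathrm{e}^{\tau\Delta}$ commutes with $\partial_x^\alpha$, reducing (1) to the $L^2$ case of (2).)

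For claims (2) and (3), I would use that on $\Omega=[-\pi,\pi]^d$ with periodic boundary conditions $\mathcal{E}_A(\tau)u^0 = G_\tau * u^0$, where $G_\tau$ is the periodized Gaussian heat kernel; its two defining features are that $G_\tau \ge 0$ pointwise and $\int_\Omega G_\tau(x)\dx = 1$ for every $\tau\ge 0$ (both inherited from the Euclidean Gaussian by periodization, and the mass-one property is also immediate from $\hat G_\tau(0)=\mathrm{e}^{-\tau\cdot 0}=1$). For $p=\infty$, pointwise $|\mathcal{E}_A(\tau)u^0(x)| \le \int_\Omega G_\tau(x-y)|u^0(y)|\,\mathrm{d}y \le \|u^0\|_\infty\int_\Omega G_\tau = \|u^0\|_\infty$. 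For $1\le p<\infty$, I would apply Jensen's inequality (or Minkowski's integral inequality) with the probability measure $G_\tau(x-y)\,\mathrm{d}y$: $|\mathcal{E}_A(\tau)u^0(x)|^p \le \int_\Omega G_\tau(x-y)|u^0(y)|^p\,\mathrm{d}y$, then integrate in $x$, use Fubini and $\int_\Omega G_\tau(x-y)\dx = 1$ to conclude $\|\mathcal{E}_A(\tau)u^0\|_p^p \le \|u^0\|_p^p$. This is the standard Young/contraction argument for a convolution against an $L^1$-normalized nonnegative kernel.

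There is no real obstacle here; the only points that deserve a line of justification are (a) that periodization preserves positivity and unit mass of the heat kernel — equivalently, that the multiplier symbol $\mathrm{e}^{-\tau|\mu|^2}$ is the Fourier series of a nonnegative function with $\mu=0$ coefficient equal to $1$ — and (b) the interchange of sum/integral in Parseval and Fubini, which is justified since all terms are nonnegative (or by first working with trigonometric polynomials and then passing to the limit by density). I would present (1) via Fourier/Parseval and (2)–(3) via the heat-kernel convolution with Jensen, which keeps all three cases short and self-contained.
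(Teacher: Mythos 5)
Your proof is correct, but it takes a genuinely different route from the paper. The paper argues entirely by time-differentiation: for (1) it computes $\partial_t\|\partial_x^\alpha u\|_2^2=-2\|\nabla\partial_x^\alpha u\|_2^2\le 0$ for each multi-index $|\alpha|\le k$; for (2) it observes that $\int_\Omega g(u)\dx$ is a Lyapunov functional for every convex $g$ (taking $g(u)=|u|^p$); and for (3) it simply cites the contraction/maximum-principle property of the heat semigroup from the literature. You instead use the spectral representation, with the multiplier bound $\mathrm{e}^{-\tau|\mu|^2}\le 1$ and Parseval for (1), and the periodized Gaussian kernel (nonnegative, unit mass) with Jensen's inequality for (2)--(3). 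Your approach buys a fully self-contained treatment of the $L^\infty$ case (no external citation) and sidesteps a small regularity wrinkle in the paper's Lyapunov argument, namely that $g(u)=|u|^p$ is not $C^2$ at the origin for $1\le p<2$, so the identity $\frac{\mathrm{d}}{\dt}\int_\Omega g(u)\dx=-\int_\Omega g''(u)|\nabla u|^2\dx$ there strictly requires a smoothing/approximation step that your convolution argument avoids; it does, however, rely on the explicit Fourier/kernel structure of the periodic torus, whereas the paper's energy and Lyapunov computations carry over verbatim to other domains and boundary conditions where an explicit kernel or eigenbasis is less convenient. Either argument is acceptable for the setting of the paper ($\Omega=[-\pi,\pi]^d$ periodic).
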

\begin{proof}
(1) Consider the evolution operator   $\mathcal{E}_A(\tau)$ and set $u(t) =  \mathcal{E}_A(t) u^0$. For any multi-index $\alpha$ satisfying $0\leq|\alpha|\leq k$, we compute   
\begin{equation}
    \begin{aligned}
        \partial_t  \left\|\partial_x^\alpha u\right\|_2^2 &=2\int_{\Omega}  \partial_x^\alpha u \partial_x^\alpha u_t \dx  =2 \int_{\Omega} \partial_x^\alpha u  (\Delta \partial_x^\alpha u) \dx 
    =-2\left\|\nabla \partial_x^\alpha u\right\|_{2}^{2} \leq 0.
    \end{aligned}
\end{equation}
This implies that  $\left\|\partial_x^\alpha u\right\|_2\leq \left\|\partial_x^\alpha u^0\right\|_2$, which leads to the desired result.

(2) Let $u(t)=\mathcal{E}_A(t)u^0$ for $t\geq 0$.  
Consider a sequence of smooth convex functions \( g_\varepsilon(u) = (u^2+ \varepsilon^2)^{p/2} \) with small parameter $\varepsilon\in(0,1]$. Straightforward calculations give
\begin{equation}\label{eq:g''}
    \begin{aligned}
        \frac{\rm d}{\dt} \int_\Omega g_\varepsilon\left(u\right)\dx =  \int_\Omega g_\varepsilon'\left(u\right)\Delta u\dx = -\int_\Omega g_\varepsilon''\left(u\right)\left|\nabla u\right|^2\dx \leq 0, \quad \forall t\geq 0.
    \end{aligned}
\end{equation}
Then, for any $\varepsilon\in(0,1]$, we have
\begin{equation}\label{eq:g_varepsilon}
   \int_{\Omega}|u(t)|^p\dx\leq \int_{\Omega} g_\varepsilon(u(t))\dx \leq \int_{\Omega} g_\varepsilon(u^0)\dx.
\end{equation}
Since $\lim_{\varepsilon\rightarrow 0}g_{\varepsilon}(u^0)= |u^0|^p$ and $g_{\varepsilon}(u^0)\leq ((u^0)^2+1)^{p/2}$, we can directly obtain 
\begin{equation}
    \lim_{\varepsilon\rightarrow 0}\int_{\Omega} g_\varepsilon(u^0)\dx = \int_{\Omega}|u^0|^p\dx
\end{equation}
by applying the dominated convergence theorem. As a consequence, taking $\varepsilon\rightarrow0$ in \eqref{eq:g_varepsilon}, we then have the desired result.
    
(3) The linear Laplace operator $\Delta$ is the generator of a contraction semigroup $\{ \mathrm{e}^{t\Delta}\}_{ t\geq 0}$ and thus naturally possesses the contraction property that $\|\mathcal{E}_A(\tau)u^0\|_\infty\leq \|u^0\|_\infty$ (see, e.g., \cite[Lemma 2.1]{du2021maximum}). This completes the proof.
\end{proof}

In fact, the $H^k$-norm contractivity in Lemma \ref{lemma:E_A_stability} will be used in the subsequent consistency analysis.
The stability analysis of the nonlinear evolution operator $\mathcal{E}_B(\tau)$ is lengthier than that of the linear evolution operator $\mathcal{E}_A(\tau)$. To facilitate the stability and convergence analysis of the high-order MPE splitting operator, we present the following four lemmas concerning $\mathcal{E}_B(\tau)$ and $\mathcal{E}_B( \tau)-\mathcal{I}$, where $\mathcal{I}$ denotes the identity operator.

\begin{lemma}[Stability of $\mathcal{E}_B(\tau)-\mathcal{I}$]\label{lemma:E_B-I_stability}Suppose $w^0\in L^p(\Omega)$ for $1\leq p \leq \infty$, and \eqref{assumption:f'_bounded} is satisfied. Then, the following property holds:
\begin{equation}\label{eq:E_B-I}
    \left\|[\mathcal{E}_B(\tau)-\mathcal{I}] w^0\right\|_p \leq ({\mathrm{e}^{\kappa\tau}-1})\left(\left\|w^0\right\|_p+\frac{|\Omega|^{\frac{1}{p}}|f(0)|}{\kappa}\right),
\end{equation}
where $\mathcal{I}$ denotes the identity operator, $|\Omega|$ is the volume of the bounded domain $\Omega$, and $|\Omega|^\frac{1}{\infty}=1$ as aforementioned.
\end{lemma}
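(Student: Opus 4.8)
The plan is to reduce everything to a pointwise-in-$x$ scalar ODE estimate. Recall that $\mathcal{E}_B(\tau)w^0 = w(\tau,\cdot)$, where for each fixed $x\in\Omega$ the map $t\mapsto w(t,x)$ solves $\partial_t w(t,x) = f(w(t,x))$ with $w(0,x)=w^0(x)$. First I would record that this flow is globally defined for $\tau\ge 0$: Assumption \ref{assumption:f'_bounded} gives, via the mean value theorem, the linear growth bound $|f(\xi)|\le |f(0)|+\kappa|\xi|$ for all $\xi\in\mathbb{R}$, so solutions cannot blow up in finite time and $\mathcal{E}_B(\tau)$ is well-posed. Writing the ODE in integral form, $w(t,x) = w^0(x) + \int_0^t f(w(s,x))\,\mathrm{d}s$.

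Second, I would derive a pointwise a priori bound by Grönwall's inequality. Combining the integral identity with the linear growth estimate yields $|w(t,x)| \le |w^0(x)| + |f(0)|\,t + \kappa\int_0^t |w(s,x)|\,\mathrm{d}s$, and the standard Grönwall argument (introduce the majorant $u(t)=|w^0(x)|+|f(0)|t+\kappa\int_0^t|w(s,x)|\,\mathrm{d}s$, note $u'\le|f(0)|+\kappa u$, integrate the linear differential inequality) produces
\[
|w(t,x)| \le \mathrm{e}^{\kappa t}|w^0(x)| + \frac{|f(0)|}{\kappa}\bigl(\mathrm{e}^{\kappa t}-1\bigr), \qquad 0\le t\le\tau,\ x\in\Omega.
\]

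Third, I would return to the defect. Since $[\mathcal{E}_B(\tau)-\mathcal{I}]w^0 = w(\tau,\cdot)-w^0 = \int_0^\tau f(w(s,\cdot))\,\mathrm{d}s$, the linear growth bound gives $|[\mathcal{E}_B(\tau)-\mathcal{I}]w^0(x)| \le \int_0^\tau \bigl(|f(0)|+\kappa|w(s,x)|\bigr)\,\mathrm{d}s$. Substituting the pointwise a priori bound from the previous step and carrying out the elementary $s$-integration, the constant and exponential terms telescope neatly into
\[
\bigl|[\mathcal{E}_B(\tau)-\mathcal{I}]w^0(x)\bigr| \le \bigl(\mathrm{e}^{\kappa\tau}-1\bigr)\Bigl(|w^0(x)| + \frac{|f(0)|}{\kappa}\Bigr), \qquad x\in\Omega.
\]
Taking the $L^p(\Omega)$ norm of both sides, using the triangle inequality and the fact that the $L^p(\Omega)$-norm of the constant function $|f(0)|$ equals $|f(0)|\,|\Omega|^{\frac1p}$ (consistent with the convention $|\Omega|^{\frac1\infty}=1$ when $p=\infty$), gives precisely \eqref{eq:E_B-I}, simultaneously for every $1\le p\le\infty$.

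There is no substantial obstacle; the argument is a textbook Grönwall estimate. The only points demanding a little care are (i) confirming that $\mathcal{E}_B$ is globally well-defined under Assumption \ref{assumption:f'_bounded}, which is exactly the role of the linear growth of $f$, and (ii) organizing the Grönwall constant and the subsequent $s$-integral so that the bound collapses to the sharp factor $\mathrm{e}^{\kappa\tau}-1$ rather than a cruder exponential. Working pointwise in $x$ — instead of differentiating $\|w(t)\|_p^p$, which would require a separate treatment of $p=1$ (non-smoothness of $|\cdot|$) and $p=\infty$ — is what lets all values of $p$ be handled uniformly in a single stroke.
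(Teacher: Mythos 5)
Your proposal is correct: the global well-posedness from linear growth, the pointwise Gr\"onwall bound $|w(t,x)|\le \mathrm{e}^{\kappa t}|w^0(x)|+\frac{|f(0)|}{\kappa}(\mathrm{e}^{\kappa t}-1)$, the cancellation of the $|f(0)|\tau$ terms in the final $s$-integration, and the concluding $L^p$ estimate all check out, and the sharp factor $\mathrm{e}^{\kappa\tau}-1$ comes out exactly as claimed. The route is organized differently from the paper's, though the engine is the same (Gr\"onwall). The paper works directly at the norm level: writing $[\mathcal{E}_B(\tau)-\mathcal{I}]w^0=\int_0^\tau f(w)\,\mathrm{d}t$, it splits $f(w)=\bigl(f(w)-f(w^0)\bigr)+f(w^0)$, uses the Lipschitz bound $\|f(w)-f(w^0)\|_p\le\kappa\|w-w^0\|_p$ (via Minkowski's integral inequality), and applies Gr\"onwall to the defect quantity $\phi(t)=\|[\mathcal{E}_B(t)-\mathcal{I}]w^0\|_p$ itself, obtaining $\phi(\tau)\le\frac{\mathrm{e}^{\kappa\tau}-1}{\kappa}\|f(w^0)\|_p$ and then bounding $\|f(w^0)\|_p\le|\Omega|^{1/p}|f(0)|+\kappa\|w^0\|_p$. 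You instead apply Gr\"onwall pointwise to $|w(t,x)|$ using the linear growth of $f$, and only pass to the $L^p$ norm at the very end. The paper's version is slightly shorter (one Gr\"onwall, no separate a priori bound on $w$ itself), and it treats all $1\le p\le\infty$ in the same stroke thanks to Minkowski's integral inequality, so the advantage you claim for the pointwise formulation over ``differentiating $\|w\|_p^p$'' applies equally to the paper's argument; on the other hand, your pointwise estimate is a mild strengthening that immediately yields the a.e.\ bounds in the spirit of the paper's Lemmas 4.4 and 4.5 as by-products. Either way, the lemma is proved.
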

\begin{proof}
Suppose $w = \mathcal{E}_B(t)w^0$. By definition,
\begin{equation}
    \begin{aligned}
        \left\|[\mathcal{E}_B(\tau)-\mathcal{I}] w^0\right\|_p &=\left \|\int^\tau_0 \partial_t w \dt\right\|_p=\left \|\int^\tau_0 f(w)\dt\right\|_p \leq  \int^\tau_0 \left \|f(w)-f(w^0)\right\|_p \dt +\int^\tau_0 \left \|f(w^0)\right\|_p \dt\\
        &\leq  \kappa\int^\tau_0 \left \|w-w^0\right\|_p \dt +\tau\left \|f(w^0)\right\|_p =  \kappa\int^\tau_0 \left \|[\mathcal{E}_B(t)-\mathcal{I}] w^0\right\|_p \dt +\tau\left \|f(w^0)\right\|_p,\\
    \end{aligned}
\end{equation}
where $\kappa$ is introduced in \eqref{eq:f'_bounded}.
Applying Gr{\"o}nwall's lemma, we obtain
\begin{equation}
    \begin{aligned}
\left\|[\mathcal{E}_B(\tau)-\mathcal{I}] w^0\right\|_p &\leq \frac{\mathrm{e}^{\kappa\tau}-1}{\kappa}\left \|f(w^0)\right\|_p\leq \frac{\mathrm{e}^{\kappa\tau}-1}{\kappa}\left(\left \|f(0)\right\|_p+\kappa \left \|w^0\right\|_p\right)\\
&\leq({\mathrm{e}^{\kappa\tau}-1})\left(\left\|w^0\right\|_p+\frac{|\Omega|^{\frac{1}{p}}|f(0)|}{\kappa}\right).
    \end{aligned}
\end{equation}
This yields the desired result.
\end{proof}
\begin{lemma}[Stability of $\mathcal{E}_B(\tau)$]\label{lemma:E_B_stability}Suppose that $w^0\in L^p(\Omega)$ for $1\leq p\leq \infty$, and \eqref{assumption:f'_bounded} is satisfied. Then,  the following property holds:
    \begin{equation}\label{eq:E_B_stability}
        \left\|\mathcal{E}_B(\tau)w^0\right\|_p \leq \mathrm{e}^{\kappa\tau}(\left\|w^0\right\|_p +|\Omega|^{\frac{1}{p}}|f(0)|\tau).
    \end{equation}
\end{lemma}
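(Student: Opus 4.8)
I would deduce this bound from the already-established Lemma~\ref{lemma:E_B-I_stability} rather than redo a Grönwall argument from scratch. The starting point is the trivial decomposition $\mathcal{E}_B(\tau) = \mathcal{I} + \big(\mathcal{E}_B(\tau)-\mathcal{I}\big)$, which by the triangle inequality gives
\[
\left\|\mathcal{E}_B(\tau)w^0\right\|_p \le \left\|w^0\right\|_p + \left\|[\mathcal{E}_B(\tau)-\mathcal{I}]w^0\right\|_p \le \left\|w^0\right\|_p + (\mathrm{e}^{\kappa\tau}-1)\left(\left\|w^0\right\|_p + \frac{|\Omega|^{1/p}|f(0)|}{\kappa}\right),
\]
the last step being exactly \eqref{eq:E_B-I}. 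Everything then reduces to checking that this last expression is at most $\mathrm{e}^{\kappa\tau}\big(\|w^0\|_p + |\Omega|^{1/p}|f(0)|\tau\big)$.

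First I would separate the two contributions. The terms proportional to $\|w^0\|_p$ combine as $\|w^0\|_p + (\mathrm{e}^{\kappa\tau}-1)\|w^0\|_p = \mathrm{e}^{\kappa\tau}\|w^0\|_p$, which already coincides with the target. It then remains only to handle the inhomogeneous part, i.e.\ to verify
\[
(\mathrm{e}^{\kappa\tau}-1)\,\frac{|\Omega|^{1/p}|f(0)|}{\kappa} \le \mathrm{e}^{\kappa\tau}\,|\Omega|^{1/p}|f(0)|\,\tau .
\]
After dividing out the nonnegative common factor $|\Omega|^{1/p}|f(0)|$, this is the elementary scalar estimate $\mathrm{e}^{\kappa\tau}-1 \le \kappa\tau\,\mathrm{e}^{\kappa\tau}$; writing $s=\kappa\tau\ge 0$ and multiplying by $\mathrm{e}^{-s}$, it is equivalent to $1-\mathrm{e}^{-s}\le s$, which holds because $h(s)=s-1+\mathrm{e}^{-s}$ satisfies $h(0)=0$ and $h'(s)=1-\mathrm{e}^{-s}\ge 0$ on $[0,\infty)$. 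Adding the two bounds finishes the argument.

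As an independent check, I would note that the estimate also follows directly: with $w(t)=\mathcal{E}_B(t)w^0$ one has $w(\tau)=w^0+\int_0^\tau f(w(t))\dt$, and since Assumption~\ref{assumption:f'_bounded} makes $f$ globally Lipschitz — which is also what guarantees that $\mathcal{E}_B$ is globally well defined and that this integral identity is legitimate — the bound $\|f(w(t))\|_p \le \|f(w(t))-f(0)\|_p + \|f(0)\|_p \le \kappa\|w(t)\|_p + |\Omega|^{1/p}|f(0)|$ leads to $\phi(\tau)\le \phi(0)+|\Omega|^{1/p}|f(0)|\tau + \kappa\int_0^\tau\phi(t)\dt$ for $\phi(t)=\|w(t)\|_p$, whereupon the integral form of Grönwall's lemma (with the nondecreasing forcing term $\phi(0)+|\Omega|^{1/p}|f(0)|\tau$) yields precisely \eqref{eq:E_B_stability}. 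There is no genuine obstacle here; the only point deserving a word of care is the well-posedness of $\mathcal{E}_B$ and the validity of the integral representation, which rest on the global Lipschitz continuity of $f$ — the same footing as Lemma~\ref{lemma:E_B-I_stability}.
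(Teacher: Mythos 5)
Your argument is correct and is essentially identical to the paper's proof: both apply the triangle inequality to $\mathcal{E}_B(\tau)=\mathcal{I}+[\mathcal{E}_B(\tau)-\mathcal{I}]$, invoke Lemma~\ref{lemma:E_B-I_stability}, and close with the elementary inequality $\mathrm{e}^{\kappa\tau}-1\leq \kappa\tau\,\mathrm{e}^{\kappa\tau}$ (stated in the paper as $\mathrm{e}^{\kappa\tau}\leq 1+\kappa\tau\,\mathrm{e}^{\kappa\tau}$). Your additional direct Gr\"onwall verification is fine but not needed.
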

\begin{proof}
    Using the triangle inequality, the result follows directly from \eqref{lemma:E_B-I_stability}
    \begin{equation}
    \begin{aligned}
 \left\|\mathcal{E}_B(\tau)w^0\right\|_p \leq   \left\|[\mathcal{E}_B(\tau)-\mathcal{I}] w^0\right\|_p +  \left\|w^0\right\|_p \leq \mathrm{e}^{\kappa\tau}(\|w^0\|_p +|\Omega|^{\frac{1}{p}}|f(0)|\tau),
    \end{aligned}
    \end{equation}
  where the final inequality is justified by the fact that $\mathrm{e}^{\kappa\tau}\leq 1+\kappa\tau \mathrm{e}^{\kappa\tau}$.
\end{proof}
\begin{lemma}[Stability of $\mathcal{E}_B(\tau)$]\label{lemma:E_B_associate}Suppose that $w^0,v^0\in L^p(\Omega)$, and \eqref{assumption:f'_bounded} is satisfied. Then, for almost every $x \in \Omega$ and all $\tau \geq 0$,
\begin{equation}
    \left|\mathcal{E}_B(\tau)w^0(x)-\mathcal{E}_B(\tau)v^0(x)\right|\leq \mathrm{e}^{\kappa\tau}\left|w^0(x)-v^0(x)\right|,
\end{equation}
which gives
\begin{equation}
    \begin{aligned}
        \left\|\mathcal{E}_B(\tau)w^0-\mathcal{E}_B(\tau)v^0\right\|_p\leq \mathrm{e}^{\kappa\tau}\left\|w^0-v^0\right\|_p.
    \end{aligned}
\end{equation}
\end{lemma}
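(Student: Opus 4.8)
The plan is to establish the pointwise contraction estimate first, since the $L^p$ bound follows immediately by raising both sides to the $p$-th power and integrating (and by taking the essential supremum when $p=\infty$). Set $w(t)=\mathcal{E}_B(t)w^0$ and $v(t)=\mathcal{E}_B(t)v^0$, so that $w$ and $v$ solve the same ODE $\partial_t w = f(w)$, $\partial_t v = f(v)$ pointwise in $x$ (recall $B$ acts as a pointwise Nemytskii map, so for almost every fixed $x\in\Omega$ these are genuinely scalar ODEs in $t$). Writing $\phi(t,x) = w(t,x)-v(t,x)$, I would compute $\partial_t \phi = f(w)-f(v)$ and then, to get a clean differential inequality, look at $\partial_t |\phi|$ or (to avoid the non-differentiability of the absolute value at zero) at $\tfrac{1}{2}\partial_t \phi^2 = \phi\,(f(w)-f(v))$.

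The key step is the mean value theorem: $f(w)-f(v) = f'(\xi)\,(w-v)$ for some $\xi$ between $w(t,x)$ and $v(t,x)$, hence $\phi\,(f(w)-f(v)) = f'(\xi)\,\phi^2 \le \kappa\,\phi^2$ by Assumption \ref{assumption:f'_bounded}. This gives $\partial_t \phi^2 \le 2\kappa\,\phi^2$ for almost every $x$, and Grönwall's lemma yields $\phi(t,x)^2 \le \mathrm{e}^{2\kappa t}\phi(0,x)^2$, i.e. $|w(t,x)-v(t,x)| \le \mathrm{e}^{\kappa t}|w^0(x)-v^0(x)|$ for almost every $x\in\Omega$ and all $t\ge 0$. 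Taking $t=\tau$ gives the first claimed inequality. Then
\begin{equation}
\left\|\mathcal{E}_B(\tau)w^0-\mathcal{E}_B(\tau)v^0\right\|_p^p = \int_\Omega \left|\mathcal{E}_B(\tau)w^0(x)-\mathcal{E}_B(\tau)v^0(x)\right|^p \dx \le \mathrm{e}^{p\kappa\tau}\int_\Omega \left|w^0(x)-v^0(x)\right|^p \dx,
\end{equation}
and taking $p$-th roots gives the $L^p$ statement; the $L^\infty$ case follows by taking the essential supremum over $x$ directly in the pointwise bound.

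The main obstacle, such as it is, is rigor about the pointwise interpretation: one must justify that $\mathcal{E}_B(t)$ really is solved coordinatewise (for a.e. $x$) by the scalar ODE, so that Grönwall can be applied in $t$ for fixed $x$, and that the resulting function of $x$ is measurable with the integration step legitimate. This is standard given that $f$ is globally Lipschitz (a consequence of Assumption \ref{assumption:f'_bounded}), which guarantees global existence, uniqueness, and continuous dependence for the scalar flow, and measurability of $x\mapsto \mathcal{E}_B(t)w^0(x)$ follows from continuous dependence on initial data composed with the measurable map $x\mapsto w^0(x)$. A minor technical point is the non-smoothness of $t\mapsto |\phi(t,x)|$ when $\phi$ vanishes; working with $\phi^2$ as above sidesteps it entirely, so no delicate argument about sign changes is needed.
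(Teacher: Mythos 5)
Your proposal is correct and follows essentially the same route as the paper: interpret $\mathcal{E}_B$ pointwise in $x$ as a scalar ODE flow, use the global Lipschitz bound $|f(w)-f(v)|\leq\kappa|w-v|$ coming from Assumption \ref{assumption:f'_bounded}, apply Gr\"onwall to get the pointwise contraction, and then pass to the $L^p$ norm by integration (essential supremum for $p=\infty$). The only cosmetic difference is that you run a differential Gr\"onwall argument on $\phi^2$ to sidestep the non-smoothness of $|\phi|$, whereas the paper writes the difference in integral form and applies the integral version of Gr\"onwall's lemma directly to $|w-v|$; both yield the identical estimate.
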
 

\begin{proof}
Suppose $w=\mathcal{E}_B(t)w^0$ and $v=\mathcal{E}_B(t)v^0$. We obtain that for almost any $x\in \Omega$, 
\begin{equation}\label{eq:pf_E_B_associate}
   \begin{aligned}
    \left|w(x,\tau) - v(x,\tau)\right|&=\left|\mathcal{E}_B(\tau)w^0(x)-\mathcal{E}_B(\tau)v^0(x)\right|=\left|\int^\tau_0 (\partial_t w-\partial_t v) \dt + w^0(x)-v^0(x)\right|\\
    &\leq \int^\tau_0 \left|f(w)-f(v)\right| \dt + \left|w^0(x)-v^0(x)\right|\leq \kappa\int^\tau_0 |w-v| \dt  + \left|w^0(x)-v^0(x)\right|.
\end{aligned} 
\end{equation}
By Gr{\"o}nwall's lemma, we have
\begin{equation}\label{eq:pf_E_B_associate1}
   \begin{aligned}
    |w(x,\tau) - v(x,\tau)|\leq \mathrm{e}^{\kappa\tau}\left|w^0(x)-v^0(x)\right|,
\end{aligned} 
\end{equation}
which also holds for the $L^p$-norm, where $1\leq p \leq \infty$.
\end{proof}

\begin{lemma}[Stability of $\mathcal{E}_B(\tau)-\mathcal{I}$]\label{lemma:E_B-I_associate}Suppose that $w^0,v^0\in L^p(\Omega)$ for $1\leq p\leq \infty$, and \eqref{assumption:f'_bounded} is satisfied. Then, for almost every $x \in \Omega$ and all $\tau \geq 0$,
    \begin{equation}
        \left|[\mathcal{E}_B(\tau)-\mathcal{I}]w^0
      (x)-[\mathcal{E}_B(\tau)-\mathcal{I}]v^0 (x)\right|\leq  \kappa \mathrm{e}^{\kappa\tau}\tau\left|w^0 (x)-v^0 (x)\right|,
    \end{equation}
which gives
\begin{equation}
    \left\|[\mathcal{E}_B(\tau)-\mathcal{I}]w^0-[\mathcal{E}_B(\tau)-\mathcal{I}]v^0\right\|_p\leq \kappa \mathrm{e}^{\kappa\tau}\tau\left\|w^0-v^0\right\|_p,
\end{equation}
    where  $\mathcal{I}$ is the identity operator.
\end{lemma}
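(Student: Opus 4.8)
The plan is to mimic the proof of Lemma \ref{lemma:E_B-I_stability} but to work with the \emph{difference} of two trajectories rather than a single one, using the pointwise estimate from Lemma \ref{lemma:E_B_associate} to control the nonlinear term. Write $w(t) = \mathcal{E}_B(t)w^0$ and $v(t) = \mathcal{E}_B(t)v^0$, so that each solves $\partial_t w = f(w)$, $\partial_t v = f(v)$ pointwise in $x$. Then, for almost every $x\in\Omega$,
\begin{equation}
    [\mathcal{E}_B(\tau)-\mathcal{I}]w^0(x) - [\mathcal{E}_B(\tau)-\mathcal{I}]v^0(x) = \int_0^\tau \big(f(w(x,t)) - f(v(x,t))\big)\dt.
\end{equation}

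Next I would take absolute values inside the integral and apply the mean value theorem together with Assumption \ref{assumption:f'_bounded} to get $|f(w(x,t)) - f(v(x,t))| \leq \kappa |w(x,t) - v(x,t)|$, and then invoke the pointwise bound $|w(x,t)-v(x,t)| \leq \mathrm{e}^{\kappa t}|w^0(x)-v^0(x)|$ from the conclusion \eqref{eq:pf_E_B_associate1} of Lemma \ref{lemma:E_B_associate}. This yields
\begin{equation}
    \left|[\mathcal{E}_B(\tau)-\mathcal{I}]w^0(x) - [\mathcal{E}_B(\tau)-\mathcal{I}]v^0(x)\right| \leq \kappa |w^0(x)-v^0(x)| \int_0^\tau \mathrm{e}^{\kappa t}\dt = (\mathrm{e}^{\kappa\tau}-1)|w^0(x)-v^0(x)|.
\end{equation}
Since $\mathrm{e}^{\kappa\tau}-1 \leq \kappa\tau\,\mathrm{e}^{\kappa\tau}$ for $\tau\geq 0$, the claimed pointwise estimate follows, and taking $L^p$-norms over $\Omega$ gives the norm version.

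The only mild subtlety is the justification of the pointwise differential identity $\partial_t w(x,t) = f(w(x,t))$ for a.e.\ $x$ and the interchange of $\partial_t$ with the $x$-dependence, but this is exactly the same regularity framework already used in the proofs of Lemmas \ref{lemma:E_B-I_stability} and \ref{lemma:E_B_associate}, so no new ideas are needed; one simply reuses that $\mathcal{E}_B$ acts pointwise via the scalar ODE. In short, the proof is essentially a one-line consequence of Lemma \ref{lemma:E_B_associate} plus the elementary integration $\int_0^\tau \mathrm{e}^{\kappa t}\dt = (\mathrm{e}^{\kappa\tau}-1)/\kappa$ and the inequality $\mathrm{e}^{\kappa\tau}-1\leq\kappa\tau\mathrm{e}^{\kappa\tau}$; there is no genuine obstacle here, only the bookkeeping of carrying the pointwise bound through the integral.
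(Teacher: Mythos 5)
Your proof is correct and follows essentially the same route as the paper: write the difference as $\int_0^\tau (f(w)-f(v))\,\mathrm{d}t$, apply the Lipschitz bound $\kappa$ from Assumption \ref{assumption:f'_bounded}, and invoke the pointwise estimate of Lemma \ref{lemma:E_B_associate}. The only cosmetic difference is that you integrate $\mathrm{e}^{\kappa t}$ exactly (yielding the slightly sharper intermediate bound $(\mathrm{e}^{\kappa\tau}-1)|w^0(x)-v^0(x)|$) and then use $\mathrm{e}^{\kappa\tau}-1\leq\kappa\tau\mathrm{e}^{\kappa\tau}$, whereas the paper simply bounds $\mathrm{e}^{\kappa t}\leq \mathrm{e}^{\kappa\tau}$ inside the integral; both give the stated constant.
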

\begin{proof}
Suppose $w=\mathcal{E}_B(t)w^0$ and $v=\mathcal{E}_B(t)v^0$. In a similar process as in \eqref{eq:pf_E_B_associate},  it follows that for almost any $x\in\Omega$, 
\begin{equation}
    \begin{aligned}
 \left|[\mathcal{E}_B(\tau)-\mathcal{I}]w^0 (x)-[\mathcal{E}_B(\tau)-\mathcal{I}]v^0 (x)\right|&=\left|\int_{0}^{\tau}\left(f(w) - f(v)\right) d t\right|\\
 &\leq  \kappa\int^\tau_0 \left|w-v\right| \dt \leq  \kappa \mathrm{e}^{\kappa\tau}\tau\left|w^0(x)-v^0(x)\right|,
\end{aligned}
\end{equation}
where the last inequality is derived from \eqref{lemma:E_B_associate}. This also holds for the $L^p$-norm, where $1\leq p \leq \infty$.
\end{proof}

\subsection{Stability of general MPE splitting methods}\label{subsection: stability for MPE}

With the properties of the evolution operator  $\mathcal{E}_A(\tau)$ and $\mathcal{E}_B(\tau)$ established above, we can now proceed with the stability analysis of MPE splitting operators. The following lemma lays the foundation for deriving the stability theorem.

\begin{lemma}\label{lemma:S-E_A} Suppose that $w^0,v^0\in L^p(\Omega)$, and \eqref{assumption:f'_bounded} is satisfied. Let the operator $\mathcal{S}(\tau) = \mathcal{E}_B\left(b_m\tau\right)\mathcal{E}_A\left(a_m\tau\right)\cdots \mathcal{E}_B\left(b_1\tau\right)\mathcal{E}_A\left(a_{1}\tau\right)$ with all $a_j,b_j\geq0$. Then, for $1\leq p\leq \infty$, it holds that
    \begin{equation}
    \begin{aligned}
         &\left\| [\mathcal{S}(\tau)-\mathcal{E}_A(\widetilde{a}\tau)]w^0-[\mathcal{S}(\tau)-\mathcal{E}_A(\widetilde{a}\tau)]v^0\right\|_p \leq\kappa \widetilde{b} \mathrm{e}^{\kappa \widetilde{b}\tau}\tau  \left\|w^0-v^0\right\|_p, \\
         &\left\|\mathcal{S}(\tau)w^0-\mathcal{E}_A(\widetilde{a}\tau)w^0\right\|_p \leq\left(\mathrm{e}^{\kappa\widetilde{b}\tau}-1\right)  \left(\left\|w^0\right\|_p+\frac{|\Omega|^{\frac{1}{p}}|f(0)|}{\kappa}\right),
         \end{aligned}
    \end{equation}
    where $\widetilde{a}=\sum_{j=1}^ma_j$ and  $\widetilde{b}=\sum_{j=1}^mb_j$.
\end{lemma}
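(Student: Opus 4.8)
The plan is to establish both inequalities at once through a single telescoping identity, writing $\mathcal{S}(\tau)-\mathcal{E}_A(\widetilde a\tau)$ as a sum of $m$ terms in each of which exactly one factor $\mathcal{E}_B(b_k\tau)$ is replaced by $\mathcal{E}_B(b_k\tau)-\mathcal{I}$. The structural point that makes this work is the semigroup property $\mathcal{E}_A(s)\mathcal{E}_A(t)=\mathcal{E}_A(s+t)$ of the heat flow: if every $\mathcal{E}_B(b_j\tau)$ inside $\mathcal{S}(\tau)$ is suppressed, the remaining adjacent heat propagators merge into the single operator $\mathcal{E}_A(\widetilde a\tau)$. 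Concretely, for $k=0,1,\dots,m$ I would set
\[
\mathcal{T}_k(\tau)=\mathcal{E}_B(b_m\tau)\mathcal{E}_A(a_m\tau)\cdots\mathcal{E}_B(b_{k+1}\tau)\,\mathcal{E}_A\Big(\big(\textstyle\sum_{j=1}^{k+1}a_j\big)\tau\Big),
\]
so that $\mathcal{T}_0=\mathcal{S}(\tau)$, $\mathcal{T}_m=\mathcal{E}_A(\widetilde a\tau)$, and, using $\mathcal{E}_A((\sum_{j=1}^{k+1}a_j)\tau)=\mathcal{E}_A(a_{k+1}\tau)\mathcal{E}_A((\sum_{j=1}^{k}a_j)\tau)$,
\[
\mathcal{T}_{k-1}-\mathcal{T}_k=\Phi_k\,[\mathcal{E}_B(b_k\tau)-\mathcal{I}]\,\mathcal{E}_A\Big(\big(\textstyle\sum_{j=1}^{k}a_j\big)\tau\Big),\qquad \Phi_k:=\mathcal{E}_B(b_m\tau)\mathcal{E}_A(a_m\tau)\cdots\mathcal{E}_B(b_{k+1}\tau)\mathcal{E}_A(a_{k+1}\tau).
\]
Summing $\mathcal{S}(\tau)-\mathcal{E}_A(\widetilde a\tau)=\sum_{k=1}^m(\mathcal{T}_{k-1}-\mathcal{T}_k)$ then reduces everything to estimating a single block $\Phi_k[\mathcal{E}_B(b_k\tau)-\mathcal{I}]\mathcal{E}_A(\cdot)$.

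For the first (Lipschitz) estimate I would chain three ingredients applied to $w^0-v^0$: the innermost $\mathcal{E}_A((\sum_{j\le k}a_j)\tau)$ is a linear $L^p$-contraction by Lemma \ref{lemma:E_A_stability}, hence does not enlarge $\|w^0-v^0\|_p$; then Lemma \ref{lemma:E_B-I_associate}, applied with time $b_k\tau$, contributes the factor $\kappa b_k\tau\,\mathrm{e}^{\kappa b_k\tau}$; then $\Phi_k$, being a composition of the $\mathcal{E}_A$'s (each Lipschitz with constant $1$) and the maps $\mathcal{E}_B(b_j\tau)$ for $j>k$ (each Lipschitz with constant $\mathrm{e}^{\kappa b_j\tau}$ by Lemma \ref{lemma:E_B_associate}), is Lipschitz with constant $\mathrm{e}^{\kappa(b_{k+1}+\cdots+b_m)\tau}\le\mathrm{e}^{\kappa\widetilde b\tau}$. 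Hence the $k$-th term is bounded by $\kappa b_k\tau\,\mathrm{e}^{\kappa\widetilde b\tau}\|w^0-v^0\|_p$, and summing over $k$ with $\sum_{k=1}^m b_k=\widetilde b$ gives $\kappa\widetilde b\,\mathrm{e}^{\kappa\widetilde b\tau}\tau\|w^0-v^0\|_p$, as claimed.

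For the second (absolute) estimate I would write $\mathcal{T}_{k-1}w^0$ and $\mathcal{T}_kw^0$ as $\Phi_k(\eta_1)$ and $\Phi_k(\eta_2)$ with $\eta_1-\eta_2=[\mathcal{E}_B(b_k\tau)-\mathcal{I}]\mathcal{E}_A((\sum_{j\le k}a_j)\tau)w^0$; the Lipschitz bound for $\Phi_k$ (constant $\mathrm{e}^{\kappa(b_{k+1}+\cdots+b_m)\tau}$, as above) leaves us to bound $\|[\mathcal{E}_B(b_k\tau)-\mathcal{I}]\mathcal{E}_A((\sum_{j\le k}a_j)\tau)w^0\|_p$, which Lemma \ref{lemma:E_B-I_stability} together with the $L^p$-contractivity of $\mathcal{E}_A$ (Lemma \ref{lemma:E_A_stability}) controls by $(\mathrm{e}^{\kappa b_k\tau}-1)\big(\|w^0\|_p+|\Omega|^{\frac1p}|f(0)|/\kappa\big)$. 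Thus the $k$-th term is at most $\mathrm{e}^{\kappa(b_{k+1}+\cdots+b_m)\tau}(\mathrm{e}^{\kappa b_k\tau}-1)\big(\|w^0\|_p+|\Omega|^{\frac1p}|f(0)|/\kappa\big)$, and since this equals $\big(\mathrm{e}^{\kappa S_k\tau}-\mathrm{e}^{\kappa S_{k+1}\tau}\big)\big(\|w^0\|_p+|\Omega|^{\frac1p}|f(0)|/\kappa\big)$ with $S_k:=\sum_{j=k}^m b_j$ and $S_{m+1}:=0$, the sum over $k=1,\dots,m$ telescopes to $(\mathrm{e}^{\kappa\widetilde b\tau}-1)\big(\|w^0\|_p+|\Omega|^{\frac1p}|f(0)|/\kappa\big)$.

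The computation is essentially bookkeeping. The step needing the most care is setting up the telescoping indices correctly---in particular checking that suppressing $\mathcal{E}_B(b_1\tau),\dots,\mathcal{E}_B(b_k\tau)$ really coalesces the heat propagators into $\mathcal{E}_A((\sum_{j\le k+1}a_j)\tau)$ in $\mathcal{T}_k$---and, for the second estimate, arranging the exponential prefactors so the sum telescopes to $\mathrm{e}^{\kappa\widetilde b\tau}-1$ rather than being bounded crudely by $m(\mathrm{e}^{\kappa\widetilde b\tau}-1)$; this refinement is precisely what keeps the constant independent of $m$ and of the spatial dimension, which is what later makes the uniform stability of the full MPE scheme possible.
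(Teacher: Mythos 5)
Your telescoping idea is close in spirit to the paper's recursion, and your treatment of the \emph{second} (absolute) estimate is correct: there you only use that $\Phi_k$ is Lipschitz between the two specific states $\eta_1=\mathcal{E}_B(b_k\tau)\mathcal{E}_A(\widetilde a_k\tau)w^0$ and $\eta_2=\mathcal{E}_A(\widetilde a_k\tau)w^0$, and the exponential prefactors telescope exactly as you say. The problem is the \emph{first} (Lipschitz-in-data) estimate. Since $\mathcal{E}_B$ is a nonlinear flow, $\Phi_k$ is a nonlinear map, so the identity $\mathcal{T}_{k-1}-\mathcal{T}_k=\Phi_k\,[\mathcal{E}_B(b_k\tau)-\mathcal{I}]\,\mathcal{E}_A(\widetilde a_k\tau)$ is false as an operator identity; what you actually have is $(\mathcal{T}_{k-1}-\mathcal{T}_k)w^0=\Phi_k(\mathcal{E}_B(b_k\tau)\mathcal{E}_A(\widetilde a_k\tau)w^0)-\Phi_k(\mathcal{E}_A(\widetilde a_k\tau)w^0)$. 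Consequently the quantity you must bound, $\bigl\|[(\mathcal{T}_{k-1}-\mathcal{T}_k)w^0]-[(\mathcal{T}_{k-1}-\mathcal{T}_k)v^0]\bigr\|_p$, is a \emph{second difference} of $\Phi_k$ at four points, and your chain (contraction of $\mathcal{E}_A$, then the factor $\kappa b_k\tau\mathrm{e}^{\kappa b_k\tau}$ from Lemma \ref{lemma:E_B-I_associate}, then the Lipschitz constant of $\Phi_k$) does not apply: a Lipschitz bound on $\Phi_k$ only yields $\mathrm{Lip}(\Phi_k)\bigl(\|\eta_1-\eta_2\|_p+\|\eta_1'-\eta_2'\|_p\bigr)$, which is $O(\tau)$ but \emph{not} $O(\tau)\|w^0-v^0\|_p$, or alternatively $\mathrm{Lip}(\Phi_k)\,C\|w^0-v^0\|_p$ without the crucial factor $\tau$. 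Recovering the factor $\kappa b_k\tau$ in front of $\|w^0-v^0\|_p$ would require second-order (e.g.\ $C^{1,1}$-type) control of the flow maps inside $\Phi_k$, which goes beyond Assumption \ref{assumption:f'_bounded} and is not argued.

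The paper avoids this by peeling the product from the \emph{outside}: in the decomposition \eqref{eq:S-E_A} the nonlinear difference factor $[\mathcal{E}_B(b_m\tau)-\mathcal{I}]$ is always the outermost operator (and the other outer factor $\mathcal{E}_A(a_m\tau)$ is linear, so it may legitimately be distributed over the bracket), whence Lemma \ref{lemma:E_B-I_associate} applies directly to the two full compositions evaluated at $w^0$ and $v^0$, and the Lipschitz factors $\mathrm{e}^{\kappa b_j\tau}$ accumulate on the \emph{inside} via Lemma \ref{lemma:E_B_associate}. Your argument can be repaired by flipping the telescoping: suppress $\mathcal{E}_B(b_m\tau),\mathcal{E}_B(b_{m-1}\tau),\dots$ first, merging the heat propagators at the outer end, so that in each telescoping term only a linear $\mathcal{E}_A$ stands outside $[\mathcal{E}_B(b_j\tau)-\mathcal{I}]$ while the nonlinear tail sits inside; that is precisely the paper's recursion written as a sum, and then both your estimates go through with the stated constants.
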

\begin{proof}
    Firstly, define $\widetilde{\mathcal{S}}(\tau) = \mathcal{E}_B\left(b_{m-1}\tau\right)\mathcal{E}_A\left(a_{m-1}\tau\right)\cdots \mathcal{E}_B\left(b_1\tau\right)\mathcal{E}_A\left(a_{1}\tau\right)$,  $\widetilde{a}_s=\sum_{j=1}^{s}a_j$, and $\widetilde{b}_s=\sum_{j=1}^{s}b_j$, for $s = 1,2,\cdots,m$. Then, 
    \begin{equation}\label{eq:S-E_A}
       \mathcal{S}(\tau)-\mathcal{E}_A(\widetilde{a}_{m}\tau)=[\mathcal{E}_B(b_m\tau)-\mathcal{I}]\mathcal{E}_A(a_m\tau)\widetilde{\mathcal{S}}(\tau)+\mathcal{E}_A(a_m\tau)[\widetilde{\mathcal{S}}(\tau)-\mathcal{E}_A(\widetilde{a}_{m-1}\tau)].
    \end{equation}
    This yields that
    \begin{equation}\label{eq:L_1+L_2}
        \left\| [\mathcal{S}(\tau)-\mathcal{E}_A(\widetilde{a}_{m}\tau)]w^0-[\mathcal{S}(\tau)-\mathcal{E}_A(\widetilde{a}_{m}\tau)]v^0\right\|_p\leq {L}_1+{L}_2,
    \end{equation}
    where 
    \begin{equation}
        \begin{aligned}
          & {L}_1 = \left\|[\mathcal{E}_B(b_m\tau)-\mathcal{I}]\mathcal{E}_A(a_m\tau)\widetilde{\mathcal{S}}(\tau)w^0-[\mathcal{E}_B(b_m\tau)-\mathcal{I}]\mathcal{E}_A(a_m\tau)\widetilde{\mathcal{S}}(\tau)v^0\right\|_p, \\
    &  {L}_2 =\left\|\mathcal{E}_A(a_m\tau)[\widetilde{\mathcal{S}}(\tau)-\mathcal{E}_A(\widetilde{a}_{m-1}\tau)]w^0-\mathcal{E}_A(a_m\tau)[\widetilde{\mathcal{S}}(\tau)-\mathcal{E}_A(\widetilde{a}_{m-1}\tau)]v^0 \right\|_p.
        \end{aligned}
    \end{equation}
    By using \eqref{lemma:E_A_stability}, \eqref{lemma:E_B_associate}, and \eqref{lemma:E_B-I_associate},  we obtain
    \begin{equation}\label{eq:L_1}
    \begin{aligned}
         {L}_1
         &\leq \kappa b_m \mathrm{e}^{\kappa b_m\tau}\tau\left\|\mathcal{E}_A(a_m\tau)\left[\widetilde{\mathcal{S}}(\tau)w^0-\widetilde{\mathcal{S}}(\tau)v^0\right]\right\|_p\leq \kappa b_m\mathrm{e}^{\kappa b_m\tau}\tau\left\|\widetilde{\mathcal{S}}(\tau)w^0-\widetilde{\mathcal{S}}(\tau)v^0\right
        \|_p\\
         &\leq \kappa b_m\mathrm{e}^{\kappa b_m\tau}\tau  \mathrm{e}^{\kappa \widetilde{b}_{m-1}\tau} \left\|w^0-v^0\right\|_p = \kappa b_m  \mathrm{e}^{\kappa \widetilde{b}_m\tau}\tau \left\|w^0-v^0\right\|_p,
    \end{aligned}
    \end{equation}
and 
\begin{equation}\label{eq:L_2}
    \begin{aligned}
         {L}_2&\leq \left\|[\widetilde{\mathcal{S}}(\tau)-\mathcal{E}_A(\widetilde{a}_{m-1}\tau)]w^0-[\widetilde{\mathcal{S}}(\tau)-\mathcal{E}_A(\widetilde{a}_{m-1}\tau)]v^0\right\|_p.
    \end{aligned}
\end{equation}
Substituting \eqref{eq:L_1} and \eqref{eq:L_2} into \eqref{eq:L_1+L_2}, we get
\begin{equation}
\begin{aligned}
&\left\| [\mathcal{S}(\tau)-\mathcal{E}_A(\widetilde{a}_{m}\tau)]w^0-[\mathcal{S}(\tau)-\mathcal{E}_A(\widetilde{a}_{m}\tau)]v^0\right\|_p\\
\leq& \kappa b_m  \mathrm{e}^{\kappa \widetilde{b}_{m}\tau}\tau \left\|w^0-v^0\right\|_p +\left\|[\widetilde{\mathcal{S}}(\tau)-\mathcal{E}_A(\widetilde{a}_{m-1}\tau)]w^0-[\widetilde{\mathcal{S}}(\tau)-\mathcal{E}_A(\widetilde{a}_{m-1}\tau)]v^0\right\|_p\\
\leq& \cdots \leq  \kappa( b_m\mathrm{e}^{\kappa \widetilde{b}_{m}\tau}+ b_{m-1}\mathrm{e}^{\kappa \widetilde{b}_{m-1}\tau}+\cdots+b_1\mathrm{e}^{\kappa \widetilde{b}_{1}\tau})\tau \left\|w^0-v^0\right\|_p\\
\leq&\kappa(b_m+b_{m-1}+\cdots+b_1)\mathrm{e}^{\kappa \widetilde{b}\tau}\tau \left\|w^0-v^0\right\|_p
=\kappa \widetilde{b}\mathrm{e}^{\kappa \widetilde{b}\tau}\tau \left\|w^0-v^0\right\|_p.
\end{aligned}
\end{equation}
Furthermore, let $\eta = |\Omega|^{\frac{1}{p}}|f(0)|$. From \eqref{eq:S-E_A}, applying  \eqref{lemma:E_A_stability}, \eqref{lemma:E_B-I_stability}, and \eqref{lemma:E_B_stability}, we obtain
\begin{equation}
\begin{aligned}
&\left\| \mathcal{S}(\tau)w^0-\mathcal{E}_A(\widetilde{a}_{m}\tau)w^0\right\|_p\\
\leq&  \left\|[\mathcal{E}_B(b_m\tau)-\mathcal{I}]\mathcal{E}_A(a_m\tau)\widetilde{\mathcal{S}}(\tau)w^0\right\|_p+\left\|\mathcal{E}_A(a_m\tau)[\widetilde{\mathcal{S}}(\tau)-\mathcal{E}_A(\widetilde{a}_{m-1}\tau)]w^0\right\|_p\\
\leq& \left(\mathrm{e}^{\kappa b_m\tau}-1\right)  \left(\left\|\widetilde{\mathcal{S}}(\tau)w^0\right\|_p+\frac{\eta}{\kappa}\right)+\left\|\widetilde{\mathcal{S}}(\tau)w^0-\mathcal{E}_A(\widetilde{a}_{m-1}\tau)w^0\right\|_p\\
\leq& \left(\mathrm{e}^{\kappa b_m\tau}-1\right)  \left(\left\|\left[\widetilde{\mathcal{S}}(\tau)-\mathcal{E}_A(\widetilde{a}_{m-1}\tau)\right]w^0\right\|_p+\left\|w^0\right\|_p+\frac{\eta}{\kappa}\right)+\left\|\left[\widetilde{\mathcal{S}}(\tau)-\mathcal{E}_A(\widetilde{a}_{m-1}\tau)\right]w^0\right\|_p\\
=& \mathrm{e}^{\kappa b_m\tau} \left\|\left[\widetilde{\mathcal{S}}(\tau)-\mathcal{E}_A(\widetilde{a}_{m-1}\tau)\right]w^0\right\|_p + \left(\mathrm{e}^{\kappa b_m\tau}-1\right) \left(\left\|w^0\right\|_p+\frac{\eta}{\kappa}\right)\\
\leq& \cdots\leq \mathrm{e}^{\kappa (b_m+b_{m-1}+\cdots+b_1)\tau} \left\|w^0 - w^0\right\|_p + \left(\mathrm{e}^{\kappa (b_m+b_{m-1}+\cdots+b_1)\tau}-1\right)\left(\left\|w^0\right\|_p+\frac{\eta}{\kappa}\right)\\
\leq&\left(\mathrm{e}^{\kappa\widetilde{b}\tau}-1\right) \left(\left\|w^0\right\|_p+\frac{\eta}{\kappa}\right),
\end{aligned}
\end{equation}
which yields the expected result.
\end{proof}

We now conclude the $L^p$-norm stability of MPE splitting operators for $1\leq p\leq\infty$ with the following theorem. 

\begin{theorem}[Stability of $\mathcal{S}^{[k]}(\tau)$]\label{thm:stability_S^k}Suppose that  $w^0,v^0\in L^p(\Omega)$ for $1\leq p\leq \infty$, and \eqref{assumption:f'_bounded} is satisfied. For MPE splitting operator $\mathcal{S}^{[k]}(\tau)$ 
 defined in \eqref{eq:MPE_scheme} with $a_{i,j}\geq 0$, $b_{i,j}\geq 0$,  $\sum^{M}_{i=1}c_i = 1$, and $\sum_{j=1}^{m_i} a_{i,j} = 1$ for \(1\leq i\leq M\), it holds that 
    \begin{equation}\label{eq:stability_S^k}
        \left\|\mathcal{S}^{[k]}(\tau)w^0-\mathcal{S}^{[k]}(\tau)v^0\right\|_p\leq \mathrm{e}^{C\tau}\left\|w^0-v^0\right\|_p,\quad\forall \tau>0,
    \end{equation}
where $C=(\widetilde{c}+1)\kappa b$, $\widetilde{c} = \sum^{M}_{i=1}|c_i|$, and $b = \max_{1\leq i\leq M} \left\{\sum_{j=1}^{m_i} b_{i,j}\right\}$.
Moreover, for any $T>0$ and $0<n\tau\leq T$, the Sobolev bounds of $\mathcal{S}^{[k]}(\tau)$ can be derived as
    \begin{equation}
\left\|\left[\mathcal{S}^{[k]}(\tau)\right]^nw^0\right\|_p\leq \mathrm{e}^{\widetilde{c}\kappa bT}\left(\left\|w^0\right\|_p+\frac{|\Omega|^{\frac{1}{p}}|f(0)|}{\kappa}\right),
    \end{equation}
where $|\Omega|^{\frac{1}{\infty}}=1$ as aforementioned.
\end{theorem}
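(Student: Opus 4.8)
The plan is to view $\mathcal{S}^{[k]}(\tau)$ as a bounded perturbation of the single heat propagator $\mathcal{E}_A(\tau)$, which is nonexpansive by Lemma~\ref{lemma:E_A_stability}. Write $\mathcal{S}^{[k]}(\tau)=\sum_{i=1}^{M}c_i\,\mathcal{S}_i(\tau)$ with $\mathcal{S}_i(\tau)=\mathcal{E}_B(b_{i,m_i}\tau)\mathcal{E}_A(a_{i,m_i}\tau)\cdots\mathcal{E}_B(b_{i,1}\tau)\mathcal{E}_A(a_{i,1}\tau)$ as in \eqref{eq:MPE_scheme1}. Since the hypothesis imposes $\sum_{j=1}^{m_i}a_{i,j}=1$ for \emph{every} $i$, Lemma~\ref{lemma:S-E_A} applies to each $\mathcal{S}_i$ with $\widetilde{a}=1$, so $\mathcal{E}_A(\widetilde{a}\tau)=\mathcal{E}_A(\tau)$ is the same operator for all $i$, and using $\sum_i c_i=1$ we may write
\begin{equation*}
\mathcal{S}^{[k]}(\tau)=\mathcal{E}_A(\tau)+\sum_{i=1}^{M}c_i\bigl(\mathcal{S}_i(\tau)-\mathcal{E}_A(\tau)\bigr).
\end{equation*}
This is the crux: the potentially large number $\widetilde{c}=\sum_i|c_i|\ge 1$ multiplies only the difference operators $\mathcal{S}_i(\tau)-\mathcal{E}_A(\tau)$, which by Lemma~\ref{lemma:S-E_A} are Lipschitz with the $O(\tau)$ constant $\kappa\widetilde{b}_i\mathrm{e}^{\kappa\widetilde{b}_i\tau}\tau$ (with $\widetilde{b}_i=\sum_j b_{i,j}\le b$), whereas the only operator that is merely nonexpansive, $\mathcal{E}_A(\tau)$, enters with coefficient exactly $1$.

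For the first estimate, subtract the decomposition evaluated at $v^0$ from that evaluated at $w^0$, use linearity and nonexpansiveness of $\mathcal{E}_A(\tau)$ on the anchor term and the first bound of Lemma~\ref{lemma:S-E_A} on each difference term to obtain
\begin{equation*}
\bigl\|\mathcal{S}^{[k]}(\tau)w^0-\mathcal{S}^{[k]}(\tau)v^0\bigr\|_p\le\Bigl(1+\widetilde{c}\,\kappa b\,\mathrm{e}^{\kappa b\tau}\tau\Bigr)\bigl\|w^0-v^0\bigr\|_p,
\end{equation*}
and then verify the scalar inequality $1+\widetilde{c}\,x\mathrm{e}^{x}\le\mathrm{e}^{(\widetilde{c}+1)x}$ for $x=\kappa b\tau\ge0$, which is immediate from $\mathrm{e}^{(\widetilde{c}+1)x}=\mathrm{e}^{x}\mathrm{e}^{\widetilde{c}x}\ge\mathrm{e}^{x}(1+\widetilde{c}x)\ge 1+\widetilde{c}x\mathrm{e}^{x}$.

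For the uniform bound, apply the decomposition to a single $w^0$ and use the second bound of Lemma~\ref{lemma:S-E_A}, $\|\mathcal{S}_i(\tau)w^0-\mathcal{E}_A(\tau)w^0\|_p\le(\mathrm{e}^{\kappa\widetilde{b}_i\tau}-1)(\|w^0\|_p+|\Omega|^{1/p}|f(0)|/\kappa)$, together with $\|\mathcal{E}_A(\tau)w^0\|_p\le\|w^0\|_p$. With $\eta=|\Omega|^{1/p}|f(0)|$ and $M_0=\|w^0\|_p+\eta/\kappa$, this gives $\|\mathcal{S}^{[k]}(\tau)w^0\|_p+\eta/\kappa\le\bigl(1+\widetilde{c}(\mathrm{e}^{\kappa b\tau}-1)\bigr)M_0$. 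Because $\widetilde{c}\ge1$ (as $1=|\sum_i c_i|\le\sum_i|c_i|=\widetilde{c}$), comparing derivatives at $x=0$ gives $1+\widetilde{c}(\mathrm{e}^{x}-1)\le\mathrm{e}^{\widetilde{c}x}$ for $x\ge0$, hence $\|\mathcal{S}^{[k]}(\tau)w^0\|_p+\eta/\kappa\le\mathrm{e}^{\widetilde{c}\kappa b\tau}(\|w^0\|_p+\eta/\kappa)$. Iterating $n$ times and using $n\tau\le T$ yields $\|[\mathcal{S}^{[k]}(\tau)]^nw^0\|_p+\eta/\kappa\le\mathrm{e}^{\widetilde{c}\kappa bT}(\|w^0\|_p+\eta/\kappa)$, and discarding the nonnegative term $\eta/\kappa$ on the left gives the stated bound.

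The only real obstacle is the algebraic rearrangement flagged above: seeing that the normalizations $\sum_i c_i=1$ and $\sum_j a_{i,j}=1$ let one pull $\mathcal{E}_A(\tau)$ out as a single nonexpansive anchor, so that the would-be exponential blow-up from $\sum_i|c_i|>1$ is absorbed by the $O(\tau)$ Lipschitz smallness of $\mathcal{S}_i(\tau)-\mathcal{E}_A(\tau)$ coming from Lemma~\ref{lemma:S-E_A}. Everything else is bookkeeping together with the two elementary scalar inequalities $1+\widetilde{c}x\mathrm{e}^x\le\mathrm{e}^{(\widetilde{c}+1)x}$ and $1+\widetilde{c}(\mathrm{e}^x-1)\le\mathrm{e}^{\widetilde{c}x}$, valid for $x\ge0$ and $\widetilde{c}\ge1$.
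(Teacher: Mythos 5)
Your proposal is correct and follows essentially the same route as the paper: decompose $\mathcal{S}^{[k]}(\tau)=\mathcal{E}_A(\tau)+\sum_i c_i\bigl(\mathcal{S}_i(\tau)-\mathcal{E}_A(\tau)\bigr)$ using $\sum_i c_i=1$ and $\sum_j a_{i,j}=1$, bound the difference terms via Lemma \ref{lemma:S-E_A} and the anchor via Lemma \ref{lemma:E_A_stability}, and close with the elementary exponential inequalities. Your iteration of the multiplicative bound on $\|\cdot\|_p+\eta/\kappa$ is only a cosmetic variant of the paper's geometric-series summation, so there is nothing substantive to add.
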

\begin{remark}
    It should be noted that the order condition of MPE splitting inherently enforces the condition \( \sum_{i=1}^{M} c_i = 1 \) for any \( k \)th-order (\( k \geq 1 \)) MPE splitting operator, while the condition \( \sum_{j=1}^{m_i} a_{i, j}= 1 \) for \(1\leq i\leq M\) is imposed to obtain the stability result in Theorem \ref{thm:stability_S^k}.
\end{remark}
\begin{proof}
We first define $\widetilde{a}_{i,s}=\sum_{j=1}^sa_{i,j}$, $\widetilde{b}_{i,s}=\sum_{j=1}^sb_{i,j}$, $\widetilde{c} = \sum^{M}_{i=1}|c_i| $, and $ b = \max_i \widetilde{b}_{i,m_i}$. We reformulate the definition of $\mathcal{S}^{[k]}(\tau)$ in \eqref{eq:MPE_scheme} as
\begin{equation}
    \begin{aligned}
        \mathcal{S}^{[k]}(\tau) =\sum_i^{M}c_i \mathcal{S}_i^{[k]}(\tau),
    \end{aligned}
\end{equation}
where $\mathcal{S}_i^{[k]}(\tau)=\mathcal{E}_B(b_{i,m_i}\tau)\mathcal{E}_A(a_{i,m_i}\tau)\cdots\mathcal{E}_B(b_{i,1}\tau)\mathcal{E}_A(a_{i,1}\tau)$. 
We can directly obtain the following result by using condition $\sum^{M}_{i=1}c_i = 1$ together with \eqref{lemma:E_A_stability} and \eqref{lemma:S-E_A},
\begin{equation}
\begin{aligned}
&\left\|\mathcal{S}^{[k]}(\tau)w^0-\mathcal{S}^{[k]}(\tau)v^0\right\|_p\\
=&\left\|\sum^{M}_{i=1}c_i\left\{\left[ \mathcal{S}_i^{[k]}(\tau)-\mathcal{E}_A(\tau)\right]w^0-\left[\mathcal{S}_i^{[k]}(\tau)-\mathcal{E}_A(\tau)\right]v^0\right\}+\sum^{M}_{i=1}c_i \mathcal{E}_A(\tau)(w^0-v^0)\right\|_p\\
\leq& \sum^{M}_{i=1}|c_i| \kappa  \widetilde{b}_{i,m_i}e^{\kappa \widetilde{b}_{i,m_i}\tau}\tau\left\|w^0-v^0\right\|_p+\left\|w^0-v^0\right\|_p.\\
\end{aligned}
\end{equation}
Since $\widetilde{c}\geq1$ and $b=\max_i \widetilde{b}_{i,m_i}$, we have
\begin{equation}
\begin{aligned}
       \left\|\mathcal{S}^{[k]}(\tau)w^0-\mathcal{S}^{[k]}(\tau)v^0\right\|_p&\leq(\widetilde{c}\kappa b \mathrm{e}^{\kappa b\tau} \tau + 1) \left\|w^0-v^0\right\|_p\leq \mathrm{e}^{(\widetilde{c}+1)\kappa b \tau} \left\|w^0-v^0\right\|_p.
\end{aligned}
\end{equation}

Furthermore, let $\eta = |\Omega|^{\frac{1}{p}}|f(0)|$. By \eqref{lemma:E_A_stability} and \eqref{lemma:S-E_A} together with $\sum^{M}_{i=1}c_i=1$, we obtain
\begin{equation}
    \begin{aligned}
    \left\|\mathcal{S}^{[k]}(\tau)w^0\right\|_p&=\left\|\sum^{M}_{i=1}c_i\left[ \mathcal{S}_i^{[k]}(\tau)-\mathcal{E}_A(\tau)\right]w^0+\mathcal{E}_A(\tau)w^0\right\|_p\\
    &\leq \sum^{M}_{i=1}|c_i| \left(\mathrm{e}^{\kappa\widetilde{b}_{i,m_i}\tau}-1\right)  \left(\left\|w^0\right\|_p+\frac{\eta}{\kappa}\right)+\left\|w^0\right\|_p\leq \mathrm{e}^{\widetilde{c}\kappa b\tau}\left\|w^0\right\|_p+\left(\mathrm{e}^{\widetilde{c}\kappa b \tau}-1\right)\frac{\eta}{\kappa}.
    \end{aligned}
\end{equation}
Therefore, we can establish Sobolev bounds on the numerical solution $u^n$ at $t
_n$-level as follows:
\begin{equation}
\begin{aligned}
     \left\|\left[\mathcal{S}^{[k]}(\tau)\right]^nw^0\right\|_p &\leq\mathrm{e}^{\widetilde{c}\kappa b\tau}\left\|\left[\mathcal{S}^{[k]}(\tau)\right]^{n-1}w^0\right\|_p+\left(\mathrm{e}^{\widetilde{c}\kappa b \tau}-1\right)\frac{\eta}{\kappa}\leq \cdots\\
    &\leq\mathrm{e}^{\widetilde{c}\kappa bn\tau}\left\|w^0\right\|_p+\left(\mathrm{e}^{\widetilde{c}\kappa b (n-1)\tau}+\mathrm{e}^{\widetilde{c}\kappa b (n-2)\tau}+\cdots+1\right)\left(\mathrm{e}^{\widetilde{c}\kappa b \tau}-1\right)\frac{\eta}{\kappa}\\
    &\leq\mathrm{e}^{\widetilde{c}\kappa bT}\left\|w^0\right\|_p+\mathrm{e}^{\widetilde{c}\kappa b T}\frac{\eta}{\kappa},\\
\end{aligned}
  \end{equation}
which completes the proof.
\end{proof}

\begin{remark}
  The above stability analysis can also be extended to homogeneous Neumann boundary conditions, as the stability of the solution operators to both subproblems. However, for general Dirichlet, Robin, and Neumann boundary conditions, order reduction might happen even for Strang splitting (see for example \cite{einkemmer2015overcoming,einkemmer2016overcoming,bertoli2020strang}). It is still a challenging problem to investigate the stability of MPE splittings for these boundary conditions.
\end{remark}

\section{Convergence of MPE splitting methods for semilinear parabolic equations}\label{section:convergence} In this section, a rigorous local error analysis and convergence of
MPE splitting methods for semilinear parabolic equations are given. We first derive the Sobolev bounds of the exact solution to the semilinear parabolic equations. To derive the global temporal error estimate, we employ the formal calculus of Lie derivative and iterated Lie-commutators, which help to establish the consistency of MPE splitting methods. Finally, we establish the convergence results of MPE splitting methods based on the previous stability results.

\subsection{Stability of $\mathcal{E}(\tau)$}\label{subsection: stability of exact solution}
We now establish the Sobolev bounds of the exact solution to \eqref{eq:semilinear equation} that will be helpful for later convergence analysis.

\begin{theorem}[Sobolev bounds of exact solution]\label{thm:exact_sol_bound}
Suppose that the initial data $u^0 \in H^m_\text{per}(\Omega)$ with $m\geq 1$, $f \in C^{m}(\mathbb{R})$, and \eqref{assumption:f'_bounded} is satisfied.  Let $u(t)$ be the exact solution to \eqref{eq:semilinear equation} with periodic boundary conditions corresponding to the initial data $u^0$. Then, for given $T\geq 0$, the exact solution $u(t)$  possesses the Sobolev bounds:
\begin{equation}\label{eq:AC_exact_sol_HK_bound}
    \underset{0\leq t\leq T}{\sup}\left\|u(t)\right\|_{H^m}\leq  \mathrm{e}^{\kappa T}(\left\|u^0\right\|_{H^m}+C),
\end{equation}
     where $C\geq 0 $ depends on $(\|u^0\|_{H^{m-1}(\Omega)},d,\kappa,m,T)$.

     In the case of $u^0 \in L^p(\Omega)$ with $p\in[1,\infty]$, $u(t)$ also possesses the $L^p$-norm bounds:
     \begin{equation}\label{eq:AC_exact_sol_Lp_bound}
    \underset{0\leq t\leq T}{\sup}\left\|u(t)\right\|_{p}\leq  \mathrm{e}^{\kappa T}(\left\|u^0\right\|_{p}+C),
\end{equation}
     where $C\geq 0 $ depends on $(|\Omega|,T,|f(0)|)$.
\end{theorem}
\begin{proof}
The proof of \eqref{thm:exact_sol_bound} is shown in \eqref{appendix:proof of thm}.
\end{proof}

\subsection{Consistency of MPE splitting methods} \label{subsection:consistency of MPE} 
In this part, we will demonstrate that the $k$th-order MPE splitting method given by $\mathcal{S}^{[k]}(\tau)$ in \eqref{eq:MPE_scheme} possesses an $O(\tau^{k+1})$ local error, based on the use of Lie derivative and iterated Lie-commutators introduced in \eqref{appendix:lie derivative}.

A particularly useful tool for the theoretical error
analysis of higher-order exponential operator splitting methods is the calculus of Lie-derivatives. This calculus allows us to formally extend arguments for the less involved linear case to nonlinear problems. In \cite{thalhammer2008high}, the local error expansion of the splitting methods consisting of two linear operators $A,B$ is derived. We note that, by applying the formal calculus of Lie derivative, the expansion for the nonlinear case can be obtained by replacing the linear operators with the Lie derivative and reversing the
order. Next, we provide the following result on the expansion of the defect operator $\mathcal{D}(t)$ \eqref{eq:defect}. 

\begin{lemma}[Expansion of $\mathcal{D}(t)$]\label{lemma:Local error expansion}
Let $\widetilde{a}_{i,s}=\sum_{j=1}^{s}a_{i,j}$ for $1 \leq s \leq m_i$, $T_{s}=\{\sigma=\left(\sigma_{1}, \ldots, \sigma_{s}\right) \in \mathbb{R}^{s}: 0 \leq \sigma_{s} \leq \cdots \leq \sigma_{1} \leq \sigma_{0}=t\}$, and $L_{i,s}=\{\lambda=\left(\lambda_{1}, \ldots, \lambda_{s}\right) \in \mathbb{N}^{s}: 1 \leq \lambda_{s} \leq \cdots \leq \lambda_{1} \leq \lambda_{0}=m_i\}$. Provided that the conditions $\sum_{i=1}^Mc_i=1$ and  $\widetilde{a}_{i,m_i}=1$ for $1\leq i\leq M$ are satisfied, the defect operator \eqref{eq:defect} of the MPE splitting operator \eqref{eq:MPE_scheme} applied to the evolutionary problem \eqref{eq:evolutionary problem} admits the expansion
\begin{equation}\label{eq:local expansion}
    \begin{aligned}
        \mathcal{D}(t)v &=\sum_{q=1}^{k} \sum_{\substack{\mu \in \mathbb{N}^{q} \\
|\mu| \leq k-q}} \frac{1}{\mu!} t^{q+|\mu|} C_{q \mu} \prod_{\ell=1}^{q} \operatorname{ad}_{D_{A}}^{\mu_{\ell}}\left(D_{B}\right) \mathrm{e}^{t D_{A}} (v)+\mathcal{R}_{k+1}(t, v), \quad 0 \leq t \leq T,
    \end{aligned}
\end{equation}
\begin{equation}\label{eq:new_order_condition}
    \begin{aligned}
C_{q 
\mu}&=\sum_{i=1}^{M}c_i\sum_{\lambda \in L_{i,q}} \alpha_{\lambda} \prod_{\ell=1}^{q} b_{i,\lambda_{\ell}} \widetilde{a}_{i,\lambda_{\ell}}^{\mu_{\ell}}-\prod_{\ell=1}^{q} \frac{1}{\mu_{\ell}+\cdots+\mu_{q}+q-\ell+1}.
    \end{aligned}
\end{equation}

The remainder $\mathcal{R}_{k+1}(t, v)=\mathcal{R}_{k+1}^{(1)}(t, v)-\mathcal{R}_{k+1}^{(2)}(t, v)-\mathcal{R}_{k+1}^{(3)}(t, v)$ comprises the terms
\begin{equation}
    \begin{aligned}
\mathcal{R}_{k+1}^{(1)}(t, v)&=\int_{T_{k+1}} \mathrm{e}^{\sigma_{k+1} D_{A+B}} \prod_{j=1}^{k+1}\left(D_{B} \mathrm{e}^{\left(\sigma_{j-1}-\sigma_{j}\right) D_{A}}\right) (v) \mathrm{~d} \sigma,
\\
\mathcal{R}_{k+1}^{(2)}(t, v)&=t^{k+1} 
\sum_{i=1}^{M}c_i\sum_{j=1}^{k+1} \sum_{\lambda \in L_{i,k+1}} \widetilde{\alpha}_{j \lambda} \prod_{\ell=1}^{\lambda_{k+1}-1}\left(\mathrm{e}^{a_{i,\lambda_{k+1}-\ell} t D_{A}} \mathrm{e}^{b_{i,\lambda_{k+1}-\ell} t D_{B}}\right) \\
&\times \mathrm{e}^{a_{i,\lambda_{k+1}} t D_{A}} \varphi_{j}\left(b_{i,\lambda_{k+1}} t D_{B}\right) \prod_{\ell=1}^{k+1}\left(b_{i,\lambda_{\ell}} D_{B} \mathrm{e}^{\left(\widetilde{a}_{i,\lambda_{\ell-1}}-\widetilde{a}_{i,\lambda_{\ell}}\right) t D_{A}}\right) (v),
    \\
\mathcal{R}_{k+1}^{(3)}(t, v)&=\sum_{q=1}^{k}\left(\int_{T_{q}} r_{q, k-q+1}(\sigma, v) \mathrm{d} \sigma-t^{q} \sum_{i=1}^{M}c_i\sum_{\lambda \in L_{i,q}} \alpha_{\lambda} \prod_{\ell=1}^{q} b_{i,\lambda_{\ell}} r_{q, k-q+1}\left(\widetilde{a}_{i,\lambda} t, v\right)\right), \\
    r_{q, k-q+1}(\sigma, v)&=(k-q+1) \int_{0}^{1}(1-\zeta)^{k-q} \sum_{\substack{\mu \in \mathbb{N}^{q} \\
 |\mu|=k-q+1}}\frac{(-1)^{|\mu|}}{\mu!} \sigma^{\mu} \mathrm{e}^{\sigma_{q} \zeta D_{A}}\\
&\times \prod_{\ell=1}^{q}\left(\operatorname{ad}_{D_{A}}^{\mu_{\ell}}\left(D_{B}\right) \mathrm{e}^{\left(\sigma_{\ell-1}-\sigma_{\ell}\right) \zeta D_{A}}\right) (v) \mathrm{~d} \zeta, 
    \end{aligned}
\end{equation}
where $\varphi_j$ is defined in \eqref{eq:phi} and the coefficients $\alpha_{\lambda},\widetilde{\alpha}_{j\lambda}$ are computable by recurrence (see Maltab code for this computation at
\url{http://techmath.uibk.ac.at/mecht/MyHomepage/Research/recurrence.m}).
 \end{lemma}

\begin{proof}
   Comparing to the proof of the expansion of the defect operator of splitting operators in \cite{koch2013error}, this proof for MPE splitting operators only needs to take into account weights. We omit the details here.
\end{proof}

It is noteworthy that if the coefficients $C_{q\mu} = 0$ for $\mu \in M$ with $|\mu| \leq k - q$ and $1 \leq q \leq k$, the local error expansion of \eqref{eq:local expansion} simplifies to $\mathcal{D}(\tau)v = \mathcal{R}_{k+1}(\tau, v)  = O(\tau^{k+1})$.  In \cite{thalhammer2008high}, Thalhammer stated that for splitting methods, the classical order conditions coincide with conditions $C_{q\mu} = 0$ for $\mu \in M$ with $|\mu| \leq k - q$ and $1 \leq q \leq k$. One can also refer to \cite{blanes2013new} for more discussion on the order conditions of splitting methods. 

The remaining question is to determine the regularity requirements on $v$ to ensure that $\mathcal{R}_{k+1}(\tau, v)$ is bounded. Specifically, the regularity of $v$ required to estimate the iterated Lie-commutators $\operatorname{ad}_{D_{A}}^{i}\left(D_{B}\right)$ for $i=0,1,\ldots,k$ must be determined. We next state and prove the consistency of MPE splitting operators.

\begin{theorem}[Consistency for $\mathcal{S}^{[k]}(\tau)$]\label{thm:consistency of MPE}
Let $\mathcal{S}^{[k]}(\tau)$ be the $k$th-order MPE splitting operator defined in \eqref{eq:MPE_scheme} with $\sum_{i=1}^Mc_i=1$ and $\sum_{j=1}^{m_i}a_{i,j}=1$ for $1\leq i\leq M$, whose coefficients satisfy the order conditions $C_{q\mu} = 0$ for all $\mu \in \mathbb{N}^q$ with $|\mu| \leq k - q$ and $1 \leq q \leq k$ as well as $a_{i,j}\geq 0$, $b_{i,j}\geq 0$. Suppose that $v \in H^{m}_\text{per}(\Omega)$ with $m=k + {d}/{2}$, the spatial dimension $d=1,2,3$, $f \in C^{2k}(\mathbb{R})$, and \eqref{assumption:f'_bounded} is satisfied. Then, the local truncation error satisfies the following estimates:  
\begin{equation}
\begin{aligned}
    \left\|\mathcal{S}^{[k]}(\tau)v - \mathcal{E}(\tau)v\right\|_p &\leq C \tau^{k+1}, \quad \text{for} \quad p\in[1,\infty),
\end{aligned}
\end{equation}
where the constant $C > 0$ depends on ($\|v\|_{H^{m}}$, $\Omega$, $d$, $\kappa$, $k$).

In the case of $p=\infty$, for any $\delta>0$ and $u^0\in H^{m+\delta}_\text{per}(\Omega)$, 
\begin{equation}
\begin{aligned} 
    \left\|\mathcal{S}^{[k]}(\tau)v - \mathcal{E}(\tau)v\right\|_\infty \leq C \tau^{k+1},
\end{aligned}
\end{equation}
where the constant $C > 0$ depends on ($\|v\|_{H^{m+\delta}}$, $\Omega$, $d$, $\kappa$, $k$).
\end{theorem}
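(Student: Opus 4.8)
The starting point is the local error expansion of Lemma~\ref{lemma:Local error expansion}. Because $\mathcal{S}^{[k]}(\tau)$ satisfies the order conditions $C_{q\mu}=0$ for every $\mu\in\mathbb{N}^{q}$ with $|\mu|\le k-q$ and $1\le q\le k$, the leading double sum in \eqref{eq:local expansion} vanishes, so that
\[
\mathcal{D}(\tau)v=\mathcal{S}^{[k]}(\tau)v-\mathcal{E}(\tau)v=\mathcal{R}_{k+1}(\tau,v)=\mathcal{R}^{(1)}_{k+1}(\tau,v)-\mathcal{R}^{(2)}_{k+1}(\tau,v)-\mathcal{R}^{(3)}_{k+1}(\tau,v),
\]
and it suffices to bound each of the three pieces by $C\tau^{k+1}$. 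A fact used repeatedly: since $m=k+d/2>d/2$, Sobolev embedding gives $v\in L^{\infty}(\Omega)$, and every composition of the non-expansive propagators $\mathcal{E}_{A}(\cdot)$ (Lemma~\ref{lemma:E_A_stability}) and the stable propagators $\mathcal{E}_{B}(\cdot)$ (Lemmas~\ref{lemma:E_B-I_stability}--\ref{lemma:E_B_stability}) occurring inside the remainder keeps the argument of the nonlinearity in a fixed bounded subset of $L^{\infty}$; hence all quantities $f^{(l)}(\cdot)$ with $0\le l\le 2k$ evaluated along these compositions are uniformly bounded, while products of spatial derivatives of $v$ are controlled by H\"older's inequality and Sobolev embeddings valid in $d=1,2,3$.

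For $\mathcal{R}^{(1)}_{k+1}$ and $\mathcal{R}^{(2)}_{k+1}$ the key point is that $B(u)=f(u)$ carries no spatial derivative, so expanding the inner products $\prod_{j}(D_{B}\mathrm{e}^{\,\cdot\,D_{A}})$ by the Leibniz rule (using $D_{B}G(v)=G'(v)f(v)$) produces finite sums of terms built solely from $f^{(l)}$, heat propagators $\mathcal{E}_{A}$, and $v$ --- no derivatives of $v$ appear. Such terms are bounded in $L^{p}$ by Lemma~\ref{lemma:E_A_stability}, Lemma~\ref{lemma:E_B_stability}, and Assumption~\ref{assumption:f'_bounded}; applying the exact flow $\mathcal{E}_{A+B}(\sigma_{k+1})$ in $\mathcal{R}^{(1)}_{k+1}$ and invoking the $L^{p}$-stability of Theorem~\ref{thm:exact_sol_bound} preserves the bound, and integrating over $T_{k+1}$ (of measure $\tau^{k+1}/(k+1)!$) gives $\|\mathcal{R}^{(1)}_{k+1}(\tau,v)\|_{p}\le C\tau^{k+1}$. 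The term $\mathcal{R}^{(2)}_{k+1}$ already carries an explicit factor $\tau^{k+1}$; the operators $\varphi_{j}(b_{i,\lambda_{k+1}}\tau D_{B})$ are bounded on $L^{p}$ via the integral representation \eqref{eq:phi} together with Lemma~\ref{lemma:E_B_stability}, the remaining $\mathcal{E}_{A}$, $\mathcal{E}_{B}$ factors are handled as above, and $\widetilde{\alpha}_{j\lambda}$, $b_{i,\lambda_{\ell}}$ are fixed constants, so $\|\mathcal{R}^{(2)}_{k+1}(\tau,v)\|_{p}\le C\tau^{k+1}$.

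The essential step --- and the one I expect to be the main obstacle --- is $\mathcal{R}^{(3)}_{k+1}$, since $r_{q,k-q+1}(\sigma,v)$ in \eqref{eq:r} is the only place where iterated Lie-commutators $\operatorname{ad}_{D_{A}}^{\mu_{\ell}}(D_{B})$ with $|\mu|=k-q+1$ appear, hence the only place where genuine Sobolev regularity of $v$ is consumed. The plan is: (i) use the structural description of these commutators from \cite{jahnke2000error,lubich2008splitting,thalhammer2012convergence}: because $A=\Delta$, the cancellations in $\operatorname{ad}_{D_{A}}^{j}(D_{B})$ and, inductively, in the composition $\prod_{\ell=1}^{q}\operatorname{ad}_{D_{A}}^{\mu_{\ell}}(D_{B})$ reduce it to a finite sum of terms $f^{(l)}(w)\,\partial_{x}^{\beta_{1}}w\cdots\partial_{x}^{\beta_{l}}w$ with total order $\sum_{s}|\beta_{s}|\le 2(k-q+1)$ and, crucially, at most two factors of the maximal order $\max_{s}|\beta_{s}|\le k-q+1\le k$; (ii) estimate each such term in $L^{p}$ by H\"older's inequality and the embeddings $H^{m-|\beta_{s}|}_{\mathrm{per}}(\Omega)\hookrightarrow L^{q_{s}}(\Omega)$ with $\sum_{s}q_{s}^{-1}\le p^{-1}$, the worst configuration being two factors of the top order $k$, controlled by $H^{m}_{\mathrm{per}}(\Omega)\hookrightarrow W^{k,2p}_{\mathrm{per}}(\Omega)$ precisely when $m\ge k+\tfrac{d}{2}-\tfrac{d}{2p}$, so $m=k+d/2$ suffices for every $p\in[1,\infty)$; (iii) use that the interleaving heat flows $\mathrm{e}^{(\sigma_{\ell-1}-\sigma_{\ell})\zeta D_{A}}$ are non-expansive on every Sobolev space (Lemma~\ref{lemma:E_A_stability}) and that the a priori $L^{\infty}$ bound keeps $f^{(l)}(\cdot)$ bounded, so the composed operator is bounded from $H^{m}_{\mathrm{per}}(\Omega)$ to $L^{p}(\Omega)$; (iv) observe $r_{q,k-q+1}(\sigma,v)$ carries the monomial $\sigma^{\mu}$ with $|\mu|=k-q+1$, so $\|r_{q,k-q+1}(\sigma,v)\|_{p}\le C\prod_{\ell}|\sigma_{\ell}|^{\mu_{\ell}}$, whence $\int_{T_{q}}\|r_{q,k-q+1}(\sigma,v)\|_{p}\,\mathrm{d}\sigma\le C\tau^{q+(k-q+1)}=C\tau^{k+1}$, while the discrete counterpart $\tau^{q}\sum_{i}c_{i}\sum_{\lambda}\alpha_{\lambda}\prod_{\ell}b_{i,\lambda_{\ell}}\,r_{q,k-q+1}(\widetilde{a}_{i,\lambda}\tau,v)$ is likewise $O(\tau^{k+1})$ since each summand contains $(\widetilde{a}_{i,\lambda_{\ell}}\tau)^{\mu_{\ell}}$ with bounded $\widetilde{a}_{i,\lambda_{\ell}}$. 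Summing over $q$ yields $\|\mathcal{R}^{(3)}_{k+1}(\tau,v)\|_{p}\le C\tau^{k+1}$, and combining the three estimates completes the $L^{p}$ case with $C$ depending on $\|v\|_{H^{m}}$, $\Omega$, $d$, $\kappa$, $k$.

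For the $L^{\infty}$ statement the argument is identical, except that landing the derivative products in $L^{\infty}$ rather than $L^{p}$ forces replacing each embedding $H^{m-|\beta_{s}|}_{\mathrm{per}}\hookrightarrow L^{q_{s}}$ by $H^{m-|\beta_{s}|}_{\mathrm{per}}\hookrightarrow L^{\infty}$, which for the top-order factors costs an extra $\delta>0$ of regularity through $H^{d/2+\delta}_{\mathrm{per}}(\Omega)\hookrightarrow L^{\infty}(\Omega)$ --- hence the hypothesis $v\in H^{m+\delta}_{\mathrm{per}}(\Omega)$; the exact flow $\mathcal{E}_{A+B}$ appearing in $\mathcal{R}^{(1)}_{k+1}$ is $L^{\infty}$-stable by the Duhamel estimate in Theorem~\ref{thm:exact_sol_bound}, and every other ingredient carries over verbatim. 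The chief difficulty throughout is the bookkeeping in step (iii)--(iv): tracking the derivative budget of the composed, heat-flow-interleaved iterated commutators and matching it against the sharp index $m=k+d/2$, for which both the cancellation structure of $\operatorname{ad}_{D_{A}}^{j}(D_{B})$ and a careful choice of H\"older exponents are indispensable.
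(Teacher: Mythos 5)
Your proposal follows essentially the same route as the paper's proof: reduce to the remainder $\mathcal{R}_{k+1}$ via the order conditions, bound $\mathcal{R}^{(1)}_{k+1}$ and $\mathcal{R}^{(2)}_{k+1}$ through the stability of $\mathcal{E}_A$, $\mathcal{E}_B$, $\mathcal{E}$ and the boundedness of $f^{(l)}$ on $L^\infty$-bounded sets, and control $\mathcal{R}^{(3)}_{k+1}$ by exploiting the cancellation that makes $\operatorname{ad}_A^{j}(B)$ involve derivatives only up to order $j$, so that the composed commutator terms require only $\partial_x^{|\mu|}v$ and are bounded via $H^{k+d/2}$ Sobolev embeddings before integrating $\sigma^\mu$ over $T_q$ to collect the factor $\tau^{k+1}$ (with the extra $\delta$ of regularity for $p=\infty$). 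Your slightly more explicit H\"older bookkeeping (two top-order factors handled via $H^{m}\hookrightarrow W^{k,2p}$) is just a refinement of the paper's terser embedding argument, not a different method.
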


\begin{proof} 
From \eqref{lemma:Local error expansion}, since the
 $\mathcal{S}^{[k]}(\tau)$ splitting operator satisfies the order conditions $C_{q\mu} = 0$ for $\mu \in M$ with $|\mu| \leq k - q$ and $1 \leq q \leq k$, it suffices to consider the remainder $\mathcal{R}_{k+1}(t, v)$. Recall that $\mathrm{e}^{\tau D_{A}}\mathcal{I}(v) = \mathcal{E}_A(\tau)v=\mathrm{e}^{\tau\Delta}v$,  $\mathrm{e}^{\tau D_{B}}\mathcal{I}(v) = \mathcal{E}_B(\tau)v$ and $\mathrm{e}^{\tau D_{A+B}}\mathcal{I}(v) = \mathcal{E}(\tau)v$ for any $\tau\geq 0$. Using the stability of $\mathcal{E}_A(\tau)$, $\mathcal{E}_B(\tau)$, and $\mathcal{E}(\tau)$ in \eqref{lemma:E_A_stability}, \eqref{lemma:E_B_stability}, and \eqref{thm:exact_sol_bound}, respectively,  along with the smoothness of $f$ in $\mathbb{R}$, we conclude that the remainders $\mathcal{R}_{k+1}^{(1)}$ and $\mathcal{R}_{k+1}^{(2)}$ in Lemma \ref{lemma:Local error expansion} are bounded as follows:
 \begin{equation}
     \left\|\mathcal{R}_{k+1}^{(1)}(\tau, v)\right\|_p+\left\|\mathcal{R}_{k+1}^{(2)}(\tau, v)\right\|_p\leq C_1\tau^{k+1},
 \end{equation}
where $1\leq p\leq\infty$ and  $C_1$ depends on $\|v\|_p$ and derivatives of $f$. 
Regarding the remainder $\mathcal{R}_{k+1}^{(3)}(\tau, v)$, we need to estimate the bounds of the term 
\begin{equation}\label{eq:derivatives of g}
     \begin{aligned}
\mathrm{e}^{\sigma_{q} \zeta D_{A}}\prod_{\ell=1}^{q}\left(\operatorname{ad}_{D_{A}}^{\mu_{\ell}}\left(D_{B}\right) \mathrm{e}^{\left(\sigma_{\ell-1}-\sigma_{\ell}\right) \zeta D_{A}}\right)(v)
     \end{aligned}
 \end{equation}
 in $r_{q,k-q+1}(\sigma,v)$. 

 Firstly, we consider the iterated Lie-commutators $\operatorname{ad}^i_{A}(B)(w)$ for $i=0,1,\ldots,k$. Recall the convention that $\operatorname{ad}^0_{A}(B)(w) = B(w)=f(w)$ for $i=0$. The first iterated Lie-commutator is given by $\operatorname{ad}_{A}(B)w = A'(w)B(w) - B'(w)A(w)$.  Specifically, we have $A(w) = \Delta w$ and $B(w) = f(w)$,  with the  Fr\'{e}chet-derivatives given by $A'(w) = \Delta$ and $B'(w) = f'(w)$. A brief calculation yields $\operatorname{ad}_{A}(B)(w) = \Delta \left(f(w)\right) - f'(w)\Delta w$. To simplify our notations, we only need to consider $\operatorname{ad}_{A}(B)$ in the one-dimensional case. In this case, we have
\begin{equation} \label{eq:1st_commutator}
    \begin{gathered}
        \operatorname{ad}_{A}(B)(w) =  \left(f''(w)(\partial_x w)^2 +f'(w)\partial^2_x w\right) -f'(w)\partial^2_x w = f''(w)(\partial_x w)^2.
    \end{gathered}
\end{equation}
By induction, for $0<i\leq k$, $\operatorname{ad}^i_{A}(B)(w)$ involves the spatial derivatives of $w$ (up to $2i$ spatial derivatives are multiplied) and derivatives of $f$ (up to the order $2i$). Since $f^{(l)}$ is continuous in $\mathbb{R}$ and then bounded on a given bounded interval for $1 \leq l \leq 2i$, we now focus on specifying the regularity requirement for $v$ such that \eqref{eq:derivatives of g} remains bounded for the $k$th-order MPE splitting operators. This implies that we need to consider the highest-order spatial derivatives of $v$, since lower-order terms can be estimated via Sobolev embeddings.

It is important to note that the first Lie-commutator involves the first-order spatial derivative term $(\partial_x w)^2$. In fact, we assume the terms involve the highest spatial derivative of $w$ of the form $G(w) = F(w)\left(\partial_x^iw\right)^l$ for positive integers $l,i$, where $F(w)$ involves the derivatives of $w$ of the order up to $i-1$. Using the Fr\'{e}chet-derivatives, it follows that
\begin{equation}\label{eq:ad_general}
    \begin{aligned}    \operatorname{ad}_{A}(G)(w) &=  A'(w)G(w) - G'(w)A(w)\\
    & =\partial_x^2\left(F(w)\left(\partial_x^iw\right)^l\right) - \left(\partial_x^iw\right)^lF'(w)\left(\partial_x^2w\right)-lF(w)\left(\partial_x^iw\right)^{l-1}\partial_x^{i+2}w.
    \end{aligned}
\end{equation}
A brief calculation for the first term of \eqref{eq:ad_general} yields that
\begin{equation}
    \begin{aligned}
\partial_x^2\left(F(w)\left(\partial_x^iw\right)^l\right) =& \partial_x\left(\left(\partial_x^iw\right)^lF'(w)(\partial_xw)+lF(w)\left(\partial_x^iw\right)^{l-1}\partial_x^{i+1}w\right)\\
=& \left(\partial_x^iw\right)^{l}F''(w)\left(\partial_xw,\partial_xw\right) + \left(\partial_x^iw\right)^{l}F'(w)(\partial^2_xw)\\
&+2l\left(\partial_x^iw\right)^{l-1}\partial_x^{i+1}wF'(w)(\partial_xw)+l(l-1)F(w)\left(\partial_x^iw\right)^{l-2}\left(\partial_x^{i+1}w\right)^2\\
&+lF(w)\left(\partial_x^iw\right)^{l-1}\partial_x^{i+2}w.
    \end{aligned}
\end{equation}
Thus, we obtain that
\begin{equation}
    \begin{aligned}
\operatorname{ad}_{A}(G)(w) =&\left(\partial_x^iw\right)^{l}F''(w)\left(\partial_xw,\partial_xw\right) +2l\left(\partial_x^iw\right)^{l-1}\partial_x^{i+1}wF'(w)(\partial_xw)\\
&+l(l-1)F(w)\left(\partial_x^iw\right)^{l-2}\left(\partial_x^{i+1}w\right)^2.
    \end{aligned}
\end{equation}
Note that the terms associated with $\partial_x^{i+2}w$ are canceled. Since the Fr\'{e}chet-derivatives $F'(w)(\partial_xw),F''(w)(\partial_xw,\partial_xw)$ at most involve $i$th-order spatial derivatives of $w$, we can conclude that $\operatorname{ad}^{i+1}_{A}(B)(w)$ involves spatial derivatives only up to the order $i+1$ by induction. The extension to arbitrary finite space dimensions is more technical, but otherwise straightforward.

Recalling \eqref{eq:ad}, the term \eqref{eq:derivatives of g}
\begin{equation}
\begin{aligned}
\mathrm{e}^{\sigma_{q} \zeta D_{A}}\prod_{\ell=1}^{q}\left(\operatorname{ad}_{D_{A}}^{\mu_{\ell}}\left(D_{B}\right) \mathrm{e}^{\left(\sigma_{\ell-1}-\sigma_{\ell}\right) \zeta D_{A}}\right)(v)=(-1)^{|\mu|}\mathrm{e}^{\sigma_{q} \zeta D_{A}}\prod_{\ell=1}^{q}\left(D_{\operatorname{ad}_{{A}}^{\mu_{\ell}}\left({B}\right) }\mathrm{e}^{\left(\sigma_{\ell-1}-\sigma_{\ell}\right) \zeta D_{A}}\right)(v)
\end{aligned}
\end{equation}
 in $r_{q,k-q+1}(\sigma,v)$ only contains $f$ and the iterated Lie-commutators $\operatorname{ad}^i_{A}(B)$ and their derivatives. Let $G_j(v) = \prod_{\ell=1}^{j}\left(D_{\operatorname{ad}_{{A}}^{\mu_{\ell}}\left({B}\right) }\mathrm{e}^{\left(\sigma_{\ell-1}-\sigma_{\ell}\right) \zeta D_{A}}\right)(v)$, for $j=1,2,\ldots,q$. Then, we have
 \begin{equation}
     \begin{aligned}
         &G_1(v) = \mathrm{e}^{\left(\sigma_{0}-\sigma_{1}\right) \zeta \Delta}\operatorname{ad}_{{A}}^{\mu_{1}}\left({B}\right)(v),\\
         &G_j(v) = \left(D_{\operatorname{ad}_{{A}}^{\mu_{j}}\left({B}\right) }\mathrm{e}^{\left(\sigma_{j-1}-\sigma_{j}\right) \zeta D_{A}}\right) G_{j-1}(v) = G'_{j-1}(\mathrm{e}^{\left(\sigma_{j-1}-\sigma_{j}\right) \zeta \Delta}v)\mathrm{e}^{\left(\sigma_{j-1}-\sigma_{j}\right) \zeta \Delta}\operatorname{ad}_{{A}}^{\mu_{j}}\left({B}\right)(v),\\
        &\mathrm{e}^{\sigma_{q} \zeta D_{A}}\prod_{\ell=1}^{q}\left(\operatorname{ad}_{D_{A}}^{\mu_{\ell}}\left(D_{B}\right) \mathrm{e}^{\left(\sigma_{\ell-1}-\sigma_{\ell}\right) \zeta D_{A}}\right)(v) = \mathrm{e}^{\sigma_{q} \zeta D_{A}}  G_q(v) = G_q(\mathrm{e}^{\sigma_{q} \zeta \Delta}v),  \end{aligned}
 \end{equation}
where integer $1<j\leq q$. From the above relations we see that the order of the highest-order spatial derivatives of $v$ in $G_j(v)$ is $\sum_{\ell=1}^j\mu_\ell$. The highest-order spatial derivatives of $v$ in the term \eqref{eq:derivatives of g} involve $\partial_x^{|\mu|}v$.

 Secondly, we consider the estimate of the $L^p$-norm of $\mathcal{R}_{k+1}^{(3)}(\tau, v)$. Firstly, we take into account the case of $p\in[1,\infty)$.  The $L^p$-norm of the term \eqref{eq:derivatives of g} can be bounded by in terms of the ${H^{|\mu|+\frac{d}{2}}}$ norm of $v$ using the stability of $\mathcal{E}_A(\tau)$ in Lemma \ref{lemma:E_A_stability}, the smoothness of $f$, and the Sobolev embedding theorem. To be specific, for the spatial dimension $d=1,2,3$,  it follows that the dominant terms in the term \eqref{eq:derivatives of g} require $v\in H_\text{per}^{|\mu|+\frac{d}{2}}(\Omega)$ for the $L^p$-norm with $p\in[1,\infty)$, as $H_\text{per}^{|\mu|+\frac{d}{2}}(\Omega)\hookrightarrow W_\text{per}^{|\mu|,p}(\Omega)$ for $p\in[1,\infty)$ by the Sobolev embedding theorem. Hence, we obtain 
\begin{equation}
    \begin{aligned}
        \|r_{q,k-q+1}(\sigma,v)\|_p&\leq C_2 \int_{0}^{1} \sum_{\substack{\mu \in \mathbb{N}^{q} \\
 |\mu|=k-q+1}}\frac{\sigma^{\mu}}{\mu!}  \left\|\mathrm{e}^{\sigma_{q} \zeta D_{A}}\prod_{\ell=1}^{q}\left(\operatorname{ad}_{D_{A}}^{\mu_{\ell}}\left(D_{B}\right) \mathrm{e}^{\left(\sigma_{\ell-1}-\sigma_{\ell}\right) \zeta D_{A}}\right)v\right\|_p  \mathrm{~d} \zeta\\
 &\leq \sum_{\substack{\mu \in \mathbb{N}^{q} \\
 |\mu|=k-q+1}}C_{\mu,{\|v\|_{H^{|\mu|+\frac{d}{2}}}}}\frac{\sigma^\mu}{\mu!},
    \end{aligned}
\end{equation}
where $C_2,C_{\mu,{\|v\|_{H^{|\mu|+\frac{d}{2}}}}}$ are constants. Using the fact that
    \begin{equation}
        \begin{aligned}
            \int_{T_{q}} \sigma^{\mu} \mathrm{d} \sigma = t^{q+|\mu|}\prod^q_{l=1}\frac{1}{\mu_l+\cdots+\mu_q+q-l+1},
        \end{aligned}
    \end{equation}
    we have
     \begin{equation}
        \begin{aligned}
\mathcal{R}_{k+1}^{(3)}(\tau, v)
\leq&\sum_{q=1}^{k}\left(\int_{T_{q}} \|r_{q, k-q+1}(\sigma, v)\|_p \mathrm{d} \sigma+t^{q} \sum_{i=1}^{M}|c_i|\sum_{\lambda \in L_{i,q}} \alpha_{\lambda} \prod_{\ell=1}^{q} b_{i,\lambda_{\ell}} \|r_{q, k-q+1}\left(\widetilde{a}_{i,\lambda} t, v\right)\|_p\right)\\
            \leq& C_{k,{\|v\|_{H^{k+\frac{d}{2}}}}}t^{k+1},
        \end{aligned}
    \end{equation}   
where $C_{k,{\|v\|_{H^{k+\frac{d}{2}}}}}$ is a constant. Therefore, for MPE splitting operators $\mathcal{S}^{[k]}(\tau)$ of order $k$, we require $v\in H_\text{per}^{m}(\Omega)$ with $m= k+\frac{d}{2}$ for $L^p$-norm with $p\in[1,\infty)$. In a similar way, we require $v\in H_\text{per}^{m+\delta}(\Omega)$ with $\delta>0$ for the $L^\infty$-norm, since $H_\text{per}^{m+\delta}(\Omega)\hookrightarrow W_\text{per}^{k,\infty}(\Omega)$.
 \end{proof}

\subsection{Convergence results}\label{subsection:stability results}
By leveraging the previously established stability and consistency of MPE splitting operators, along with the regularity of the exact solution, we derive the following convergence result.

\begin{theorem}[Convergence of $\mathcal{S}^{[k]}(\tau)$]
    Assume that the coefficients of
the $k$th-order MPE splitting operator $\mathcal{S}^{[k]}(\tau)$ defined in \eqref{eq:MPE_scheme} satisfy the conditions $\sum_{i=1}^Mc_i=1$ and $\sum_{j=1}^{m_i}a_{i,j}=1$ for all $i$ and $a_{i,j},b_{i,j}\geq 0 $. Suppose that the initial data $u^0 \in H^{m}_\text{per}(\Omega)$ with $m = k+\lceil d/2\rceil$ and the spatial dimension $d=1,2,3$, $f \in C^{2k}(\mathbb{R})$, and \eqref{assumption:f'_bounded} is satisfied.  Let $u(t)$ be the exact solution to \eqref{eq:semilinear equation} with periodic boundary conditions corresponding to the initial data $u^0$. For any $T> 0$ and $0 < \tau \leq T$, the following error estimate holds:
 \begin{equation}
 \begin{aligned}
         &\sup_{n\tau \leq T}\left\|[\mathcal{S}^{[k]}(\tau)]^nu^0-u(n\tau)\right\|_p\leq  C\tau^k,\quad \text{for} \quad p\in[1,\infty),
 \end{aligned}
 \end{equation}
 where  $C> 0$ depends on $(\|u^0\|_{H^{m}},\Omega,d,\kappa,k,T)$.
 
 In the case of $p=\infty$, for any $\delta>0$ and $u^0\in H^{m+\delta}_\text{per}(\Omega)$, 
 \begin{equation}
 \begin{aligned}
         &\sup_{n\tau \leq T}\left\|[\mathcal{S}^{[k]}(\tau)]^nu^0-u(n\tau)\right\|_p\leq  C\tau^k,
 \end{aligned}
 \end{equation}
where  $C> 0$ depends on $(\|u^0\|_{H^{m+\delta}},\Omega,d,\kappa,k,T)$.
\end{theorem}
\begin{proof}
Consider the case of $p\in[1,\infty)$. Based on the Sobolev norm bounds of the exact solution $u(n\tau)$ in \eqref{thm:exact_sol_bound}, the smoothness of $f$, and the consistency in \eqref{thm:consistency of MPE}, we obtain the following inequality at $t
_n$-level ($t_n=n\tau$)
\begin{equation}
     \sup_{0\leq n\leq N}\left\| \mathcal{S}^{[k]}(\tau) u(t_n) - \mathcal{E}(\tau) u(t_n)\right\|_p \leq  C_1\tau^{k+1},\quad t_N\leq T,
\end{equation}
where $C_1$ depends on $(\|u^0\|_{H^{m}},\Omega,d,\kappa,k,T)$.
 
    Therefore, according to Theorem \ref{thm:stability_S^k}, we have
    \begin{equation}
    \begin{aligned}
\left\|u^{n}-u(t_n)\right\|_p \leq &  \left\| \mathcal{S}^{[k]}(\tau) u^{n-1} - \mathcal{S}^{[k]}(\tau) u(t_{n-1})\right\|_p +\left\| \mathcal{S}^{[k]}(\tau) u(t_{n-1}) - \mathcal{E}(\tau) u(t_{n-1})\right\|_p \\
    \leq & \mathrm{e}^{C_2\tau}\left\| u^{n-1} - u(t_{n-1})\right\|_p + C_1\tau^{k+1}\\
    \leq & \cdots \leq \mathrm{e}^{C_2T}\left\| u^{0} - u(0)\right\|_p + C_1\tau^{k+1} \frac{\mathrm{e}^{C_2T}-1}{\mathrm{e}^{C_2\tau}-1}\\
    \leq &  \frac{C_1}{C_2}(\mathrm{e}^{C_2T}-1)\tau^k = O(\tau^k),
    \end{aligned}
\end{equation}
 where $C_2$ represents the constant $C$ in \eqref{eq:stability_S^k}. This leads to the desired result. It is similar for the $L^\infty$-norm.
\end{proof}

\section{Numerical experiments}\label{section:numerical experiments}
In this section, we present two-dimensional numerical experiments to evaluate the stability, convergence order, and efficiency of the third-, fourth- and sixth-order operator splitting methods applied to semilinear parabolic models with periodic boundary conditions. The spectral method is utilized to compute the linear evolution operator $\mathcal{E}_A(\tau)$ using the FFT technique within the framework of a spectral method \cite{shen2011spectral}.  Given the spatial grid step $h=L/N$, the solution is computed as follows:
\begin{equation}\label{eq:sdg_linear}
    \begin{aligned}
    \mathcal{E}_A(\tau)v = \mathcal{F}_N^{-1}\{\exp(-\tau\lambda_{pq})\mathcal{F}_N[v](p,q)\}\quad \text{with}\quad 
    \lambda_{pq} = \left(\frac{2p\pi}{L}\right)^2 + \left(\frac{2q\pi}{L}\right)^2,
    \end{aligned}
\end{equation}
for $p,q = 0,1,\ldots,N/2-1, 0, -N/2+1,-N/2+2,\ldots,-1$. Here, $\mathcal{F}_N,\mathcal{F}_N^{-1}$ represent the FFT and IFFT algorithms, respectively.

To approximate $\mathcal{E}_B(\tau)$, we employ the explicit 10-stage, 4th-order strong stability preserving Runge–Kutta (SSP-RK) method \cite{gottlieb2011strong},  ensuring sufficient numerical accuracy without compromising the experimental results.

In the following examples, 
we ensure that \eqref{assumption:f'_bounded} holds for 
$f$ by employing the truncation technique proposed in \cite{shen2010numerical}. Specifically, we modify $f$ by truncating it for $|x| > M$ and replacing it with a linear polynomial that connects to the inner part, where $M$ is chosen to be sufficiently large. This guarantees the existence of a constant $\kappa$ satisfying \eqref{eq:f'_bounded}. 

\subsection{Convergence  tests of MPE splittings}\label{subsection:toy}
To test the accuracy of the proposed MPE splitting methods  in Section \ref{section:MPE for rd equation}, we first consider a toy model of the form:
\begin{equation}\label{eq:toy}
    \begin{aligned}
        u_{t}  = \varepsilon^2 \Delta u + f_{\rm T}(u), \\
    \end{aligned}
\end{equation}
where $\varepsilon = 0.1$ and $ f_{\rm T}(u) = \lambda \tanh{u}$ inherently satisfies the condition \eqref{eq:f'_bounded}  with a constant $\lambda$.  Note that $\mathcal{E}_B(\tau)v = \mathrm{arcsinh}((\sinh{v})\exp(\lambda \tau))$, which implies that $\mathcal{E}_B(\tau)v$ can be computed exactly. The computational domain is a $2\pi$-periodic square $\Omega = [0, 2\pi]^2$, discretized using a $1024 \times 1024$ grid. The initial data is set as $u^0(x, y) = 0.5 \sin(x) \sin(y)$. We employ $\mathcal{S}^{[6]}$ splitting with a small splitting step $\tau = 1/200$ to generate a reference solution at $T = 6$. Subsequently, numerical solutions are obtained by using various splitting steps at $T = 6$.

The $L^\infty$-errors between these numerical solutions and the reference solution are summarized in \eqref{tab:convergence_toy}. The convergence rates are approximately 3 for the $\mathcal{S}^{[3],1},\mathcal{S}^{[3],2}$ splittings, aligning with their third-order temporal accuracy. Furthermore, the $\mathcal{S}^{[4],1},\mathcal{S}^{[4],2},\mathcal{S}^{[4],3},$ and $\mathcal{S}^{[4],4}$ splittings demonstrate fourth-order accuracy in time. However, due to machine error and the rapid convergence of the sixth-order splitting, it is challenging to measure the convergence rate accurately very close to 6.

\begin{table}[htb!]
\renewcommand\arraystretch{1.}
\begin{center}
\def\temptablewidth{1\textwidth}
\caption{$L^\infty$-errors of numerical solutions of several splitting methods at time $T = 6$ to the toy model \eqref{eq:toy} for different splitting steps.}\label{tab:convergence_toy}
{\rule{\temptablewidth}{1.1pt}}
\begin{tabular*}{\temptablewidth}{@{\extracolsep{\fill}}ccccccc}
   $\tau$  & $\frac1{5}$  &$\frac1{10}$     &$\frac1{20}$     &$\frac1{40}$
   &$\frac1{80}$   \\  \hline
  Error of $\mathcal{S}^{[3],1}$  & $11599\times 10^{-3}$  & $1.6097\times 10^{-4}$   & $2.1071\times 10^{-5}$   & $7.3997\times 10^{-4}$ & $2.6925\times 10^{-6}$   \\[3pt]
Rate  of $\mathcal{S}^{[3],1}$   & -- & $2.8491$ & $2.9334$  & $2.9683$  &  $2.9849$ \\[3pt]\hline
  Error of $\mathcal{S}^{[3],2}$  & $79221\times 10^{-3}$  & $1.0234\times 10^{-3}$   & $1.2885\times 10^{-4}$   & $1.6124\times 10^{-5}$ & $2.0153\times 10^{-6}$   \\[3pt]
Rate  of $\mathcal{S}^{[3],2}$   & -- &  $2.9524$ & $2.9897$    $2.9985$  &  $3.0001$   \\[3pt]\hline
  Error of $\mathcal{S}^{[4],1}$  & $6.6385\times 10^{-4}$  & $4.6578\times 10^{-5}$   & $3.0387\times 10^{-6}$   & $1.9216\times 10^{-7}$ & $1.2018\times 10^{-8}$   \\[3pt]
Rate  of $\mathcal{S}^{[4],1}$   & -- &  $3.8332$ & $3.9381$  & $ 3.9831$  &  $3.9991$   \\[3pt]\hline
  Error of $\mathcal{S}^{[4],2}$    & $6.2021\times 10^{-5}$   & $4.4917\times 10^{-6}$   & $3.0276\times 10^{-7}$ & $1.9571\times 10^{-8}$ & $1.2295\times 10^{-9}$  \\[3pt]
Rate  of $\mathcal{S}^{[4],2}$   & -- & $3.7874$ &$3.8910$  &$3.9514$& $3.9926$ \\[3pt]\hline
  Error of $\mathcal{S}^{[4],3}$    & $1.6849\times 10^{-4}$   & $1.1994\times 10^{-5}$   & $7.8141\times 10^{-7}$ & $4.9444\times 10^{-8}$ & $3.2300\times 10^{-9}$  \\[3pt]
Rate  of $\mathcal{S}^{[4],3}$   & -- & $3.8122$ &$3.9401$  &$3.9822$& $3.9362$ \\[3pt]\hline
      Error of $\mathcal{S}^{[4],4}$    & $6.4011\times 10^{-4}$   & $4.5313\times 10^{-5}$   & $2.9559\times 10^{-6}$ & $1.8682\times 10^{-7}$ & $1.1853\times 10^{-8}$  \\[3pt]
Rate  of $\mathcal{S}^{[4],4}$   & -- &  $3.8203$  &  $3.9383$  &  $3.9839$  &  $3.9783$ \\[3pt]\hline
   $\tau$   &$\frac1{4}$     &$\frac1{6}$     &$\frac1{10}$
   &$\frac3{40}$  & $\frac1{20}$ \\[3pt]\hline
     Error of $\mathcal{S}^{[6]}$    & $3.6941\times 10^{-5}$   & $2.7087\times 10^{-6}$   & $3.0025\times 10^{-7}$ & $5.9464\times 10^{-8}$ & $5.8275\times 10^{-9}$  \\[3pt]
Rate  of $\mathcal{S}^{[6]}$   & -- &  $5.1150$   & $5.4249$  &  $5.6286$  & $5.7287$ 
\end{tabular*}
{\rule{\temptablewidth}{1pt}}
\end{center}
\end{table}

\subsection{Comparison of different splitting methods}\label{subsection:AC}
We consider the following Allen--Cahn (AC) equation with double-well potential \cite{quan2022stability}:
\begin{equation}\label{eq:AC}
    \begin{aligned}
        u_{t}  = \varepsilon^2 \Delta u +  f_{\rm AC}(u), \quad f_{\rm AC}(u) = u-u^3,\\
    \end{aligned}
\end{equation}
with the initial condition consisting of seven circles with centers and radii given in \eqref{tab:init_AC}:
\begin{equation} \label{eq:init_AC}
    \begin{aligned} 
        u^0(x, y)=-1+\sum_{i=1}^7 f_0\left(\sqrt{\left(x-x_i\right)^2+\left(y-y_i\right)^2}-r_i\right)
    \end{aligned}, 
\end{equation}
and $f_0$ is defined by
\begin{equation}
    \begin{aligned}
f_0(s)= \begin{cases}2 \mathrm{e}^{-\varepsilon^2 / s^2}, & \text { if } s<0, \\ 0, & \text { otherwise. }\end{cases}
    \end{aligned}
\end{equation}
To ensure the boundedness of derivative, $f_{\rm AC}$ is replaced with $\widetilde{f}_{\rm AC}$ (see \cite{shen2010numerical}):
\begin{equation}\label{eq:truncation_ac}
    \begin{aligned}
        \widetilde{f}_{\rm AC} = \begin{cases} 
(1-3M^2)u+2M^3,&u>M,\\
u-u^3, &u\in[-M,M],  \\
(1-3M^2)u-2M^3,&u<-M
\end{cases}
    \end{aligned}
\end{equation}
with $M=6$.

\begin{table}[htb!]
\renewcommand\arraystretch{1.0}
\begin{center}
\def\temptablewidth{1\textwidth}
\caption{Centers $(x_i,y_i)$ and radii $r_i$in the initial condition \eqref{eq:init_AC}.}\label{tab:init_AC}
{\rule{\temptablewidth}{1.1pt}}
\begin{tabular*}{\temptablewidth}{@{\extracolsep{\fill}}l|lllllll}
$i$ & 1 & 2 & 3 & 4 & 5 & 6 & 7 \\
\hline$x_i$ & $\pi / 2$ & $\pi / 4$ & $\pi / 2$ & $\pi$ & $3 \pi / 2$ & $\pi$ & $3 \pi / 2$ \\
$y_i$ & $\pi / 2$ & $3 \pi / 4$ & $5 \pi / 4$ & $\pi / 4$ & $\pi / 4$ & $\pi$ & $3 \pi / 2$ \\
$r_i$ & $\pi / 5$ & $2 \pi / 15$ & $2 \pi / 15$ & $\pi / 10$ & $\pi / 10$ & $\pi / 4$ & $\pi / 4$ \\
\end{tabular*}
{\rule{\temptablewidth}{1pt}}
\end{center}
\end{table}

In this numerical experiment, we use $512\times 512$ Fourier modes for the space discretization on computation domain $\Omega = [0,2\pi]^2$. The parameter is set as $\varepsilon = 0.1$. When $\|v\|_\infty \leq M$, we can straightforwardly use the explicit expressions to compute $\mathcal{E}_B(\tau)v$:
\begin{equation}
    \begin{aligned}
        \mathcal{E}_B(\tau)v = \frac{\mathrm{e}^{\tau}v}{\sqrt{1+(e^{2\tau}-1)v^2}}.
    \end{aligned}
\end{equation}

To compare the stability of the high-order MPE splitting methods with that of high-order splitting methods, we simulate the evolution of the AC model up to the process at $T= 10$ with $\tau = 1/40$  using the second-order Strang splitting $\mathcal{S}^{[2],1}$ in \eqref{eq:Strang_scheme}, the fourth-order MPE $\mathcal{S}^{[4],4}$ in \eqref{eq:richardson schemes},the fourth-order  splitting method admitting negative step coefficients $\mathcal{S}^{[4]}_{\rm neg}$ defined as:
\begin{equation}\label{eq:S4_neg}
    \begin{aligned}
        \mathcal{S}^{[4]}_{\rm neg}\left(\tau\right) =\mathrm{e}^{\frac{s}{2}\tau D_A} \mathrm{e}^{{s}\tau D_B}\mathrm{e}^{\frac{1-s}{2}\tau D_A}\mathrm{e}^{{(1-2s)}\tau D_B}\mathrm{e}^{\frac{1-s}{2}\tau D_A}\mathrm{e}^{{s}\tau D_B}\mathrm{e}^{\frac{s}{2}\tau D_A},
    \end{aligned}
\end{equation}
with $s = ({2+2^{\frac{1}{3}}+2^{-\frac{1}{3}}})/{3}$,  and an alternative 4th-order splitting scheme $\mathcal{S}^{[4]}_{\rm com}$ with complex time step coefficients given by (see \cite{castella2009splitting})
\begin{equation}\label{eq:S4_com}
    \begin{aligned}
        \mathcal{S}^{[4]}_{\rm com}\left(\tau\right) =\mathrm{e}^{\frac{r_1}{2}\tau D_A} \mathrm{e}^{{r_1}\tau D_B}\mathrm{e}^{\frac{r_1+r_2}{2}\tau D_A}\mathrm{e}^{{r_2}\tau D_B}\mathrm{e}^{\frac{r_1+r_2}{2}\tau D_A}\mathrm{e}^{{r_1}\tau D_B}\mathrm{e}^{\frac{r_1}{2}\tau D_A},
    \end{aligned}
\end{equation}
    where
        \[ r_1 = \frac{\mathrm{e}^{\mathrm{i} \pi /3}}{2\mathrm{e}^{\mathrm{i} \pi /3}+2^{1/3}},\quad r_2 = \frac{2^{1/3}}{2\mathrm{e}^{\mathrm{i} \pi /3}+2^{1/3}}\]
have positive real parts.
It is well-known that the exact solution to the AC equation with periodic boundary conditions satisfies the energy dissipation law: 
\begin{equation}\label{eq:AC_energy}
    \begin{aligned}
E(u) = \int_{\Omega} \frac{\varepsilon^2}{2}|\nabla u|^2 + \frac{(u^2-1)^2}{4} \dx,    
    \end{aligned}
\end{equation}
which decays with respect to time, and the maximum principle: if $\|u^0\|_\infty \leq 1$, then $\|u(t)\|_\infty \leq 1$ for $0\leq t\leq T$.

For the temporal accuracy test, the spatial step is fixed at $h = 1/512$. The $L^\infty$ numerical errors at $T = 10$ are evaluated. The reference solution is obtained using the $\mathcal{S}^{[4],4}$ \eqref{eq:richardson schemes} splitting operator with $h = 1/512$ and $\tau = 1/2000$. As shown in \eqref{tab:convergence_ac}, the $\mathcal{S}^{[2],1}$ splitting operator achieves a second-order temporal accuracy, while the $\mathcal{S}^{[4],4}$ and $\mathcal{S}^{[4]}_{\rm{com}}$ splitting operators exhibit a fourth-order temporal accuracy as the time step size refines. In contrast, we observe that $\mathcal{S}^{[4]}_{\rm{neg}}$ becomes unstable at step sizes $ \tau = 1/40$  and  $1/80$, leading to significant error amplification. As the time step is further refined, the accuracy approaches the fourth order.

The evolution results of solutions using these three splitting methods are shown in \eqref{fig:ac_solution}. These results demonstrate that the evolution of the solution using the $\mathcal{S}^{[4],4}$ splitting operator is the same as that of the stabilized Strang splitting operator, preserving the maximum principle. However, the $\mathcal{S}^{[4]}_{\rm neg}$ splitting operator evolves incorrectly at $T = 10$. In \eqref{fig:ac_properties}, the maximal value of $\mathcal{S}^{[4]}_{\rm neg}$ contradicts the maximum principle at $t = 1$. Furthermore, the energy evolution of each splitting operator shows that the evolution of $\mathcal{S}^{[4],4}$ and  $\mathcal{S}^{[4]}_{\rm com}$ coincides with that of $\mathcal{S}^{[2],1}$, while the $\mathcal{S}^{[4]}_{\rm neg}$ splitting operator begins to increase around $t = 1$. Both the energy dissipation law and the maximum principle of the AC equation are violated in the $\mathcal{S}^{[4]}_{\rm neg}$ splitting operator with $\tau = 1/40$. This illustrates the instability of the high-order splitting methods with negative step coefficients for coarse time steps. The reason for this is that the $\mathcal{E}_A$ operator becomes unstable at negative time steps, necessitating stabilization of the $\mathcal{E}_A$ operator by applying certain CFL condition restrictions on step size. In contrast, the properties of MPE splitting methods  and splitting methods with complex time step coefficients are excellent.

\begin{table}[htb!]
\renewcommand\arraystretch{1.5}
\begin{center}
\def\temptablewidth{1\textwidth}
\caption{$L^\infty$-errors of numerical solutions of four splitting operators at time $T = 10$ to the AC equation \eqref{eq:AC} for different splitting steps with $512\times512$ Fourier modes.}\label{tab:convergence_ac}
\vspace{-0.2in}
{\rule{\temptablewidth}{1.1pt}}
\begin{tabular*}{\temptablewidth}{@{\extracolsep{\fill}}ccccc}
   $\tau$   &$\frac1{40}$     &$\frac1{80}$     &$\frac1{160}$
   &$\frac1{320}$\\  \hline
   
  Error of $\mathcal{S}^{[2],1}$    & $1.1231\times 10^{-4}$   & $2.8517\times 10^{-5}$   & $7.1668\times 10^{-6}$ & $1.7945\times 10^{-6}$ \\[3pt]
Rate  of $\mathcal{S}^{[2],1}$   & -- & $1.9775$ &$1.9924$  &$1.9978$ \\[3pt]\hline

  Error of $\mathcal{S}^{[4],4}$    & $5.9227\times 10^{-7}$   & $5.0408\times 10^{-8}$   & $3.7091\times 10^{-9}$ & $ 2.5807\times 10^{-10}$ \\[3pt]
Rate  of $\mathcal{S}^{[4],4}$   & -- & $3.5544$ &$3.7645$  &$3.8452$ \\[3pt]\hline

  Error of $\mathcal{S}^{[4]}_{\rm{com}}$    & $2.9496\times 10^{-7}$   & $2.4213\times 10^{-8}$   & $1.7491\times 10^{-9}$ & $ 1.2548\times 10^{-10}$  \\[3pt]
Rate  of $\mathcal{S}^{[4]}_{\rm{com}}$   & -- &$3.6066$ &$3.7911$  &$3.8010$ \\[3pt]\hline

  Error of $\mathcal{S}^{[4]}_{\rm{neg}}$    & $1.5280$   & $2.1804\times 10^{-7}$   & $4.8148\times 10^{-9}$ & $3.3334\times 10^{-10}$   \\[3pt]
Rate  of $\mathcal{S}^{[4]}_{\rm{neg}}$   & -- &  $22.7406$ &$5.5010$  &$3.8524$
\end{tabular*}
{\rule{\temptablewidth}{1pt}}
\end{center}
\end{table}

\begin{figure}[htbp]
    \centering
    \includegraphics[trim = {0in 0in 0in 0in},clip,width = 0.9\textwidth]{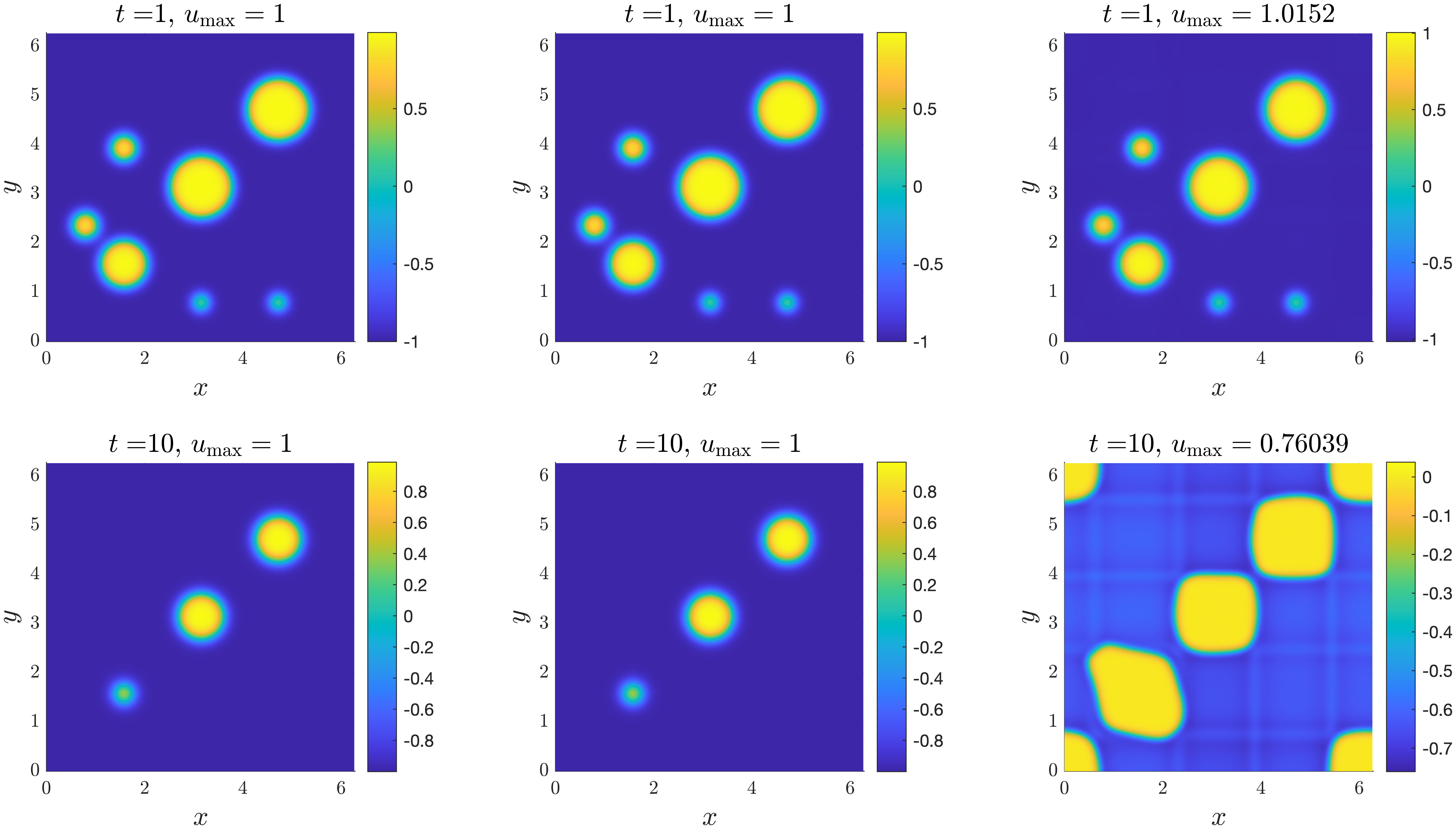} 
    \caption{Numerical solutions of the AC equation \eqref{eq:AC} at time $t=1$ (top row) and $t=10$ (bottom row), computed with a fixed time step $\tau = 1/40$. Solutions in each row, from left to right, correspond to the splitting operators: $\mathcal{S}^{[2],1}$, $\mathcal{S}^{[4],4}$, $\mathcal{S}^{[4]}_{\rm com}$, and  $\mathcal{S}^{[4]}_{\rm neg}$.}
    \label{fig:ac_solution}
\end{figure}

\begin{figure}[htbp]
    \centering
    \includegraphics[trim = {0in 0in 0in 0in},clip,width = 0.5\textwidth]{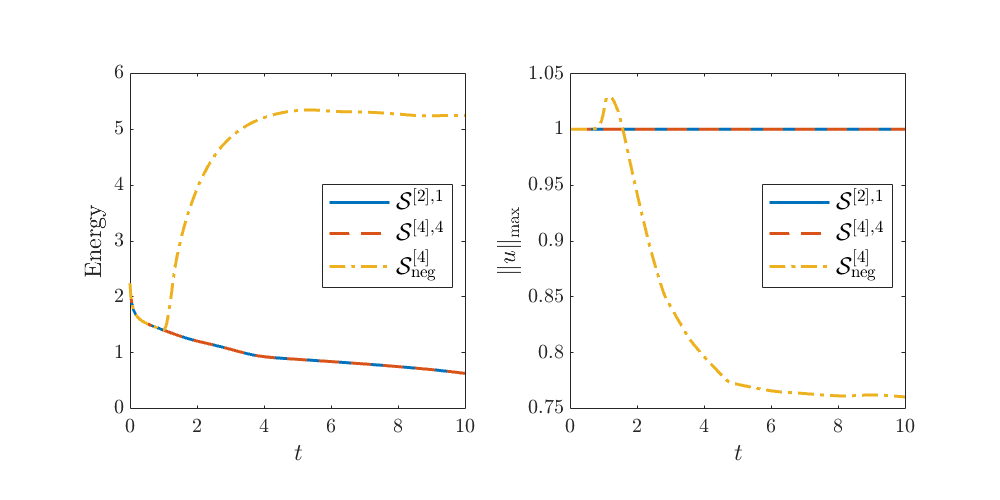} 
    \caption{The temporal evolution of energy (left) and evolution of maximum norm (right) of the
four operator splitting methods with $\tau = 1/40$ until $T=10$.}
    \label{fig:ac_properties}
\end{figure}

\section{Conclusions}\label{section:conclusions}
In this work, we established for the first time a framework for analyzing the stability of arbitrarily high-order MPE splitting methods for nonlinear semilinear parabolic equations, under mild regularity assumptions on the reaction term. Based on this stability result, it is then proved rigorously that MPE splitting methods achieve the desired order of accuracy in $L^p$ norm.  Numerical experiments corroborated the theoretical findings, showcasing the accuracy and stability. In future work, we aim to investigate the structure preservation of MPE splittings (such as the maximum principle, energy dissipation for gradient flows) for other evolution equations.



\section*{Acknowledgments}
The work of X. Duan is supported by National Natural Science Foundation of China (No. 12101424). C. Quan is supported by the National Natural Science Foundation of China (Grant No. 12271241), Guangdong Provincial Key Laboratory of Mathematical Foundations for Artificial Intelligence (2023B1212010001), Guangdong Basic and Applied Basic Research Foundation (Grant No. 2023B1515020030),  Shenzhen Science and Technology Innovation Program (Grant No. JCYJ20230807092402004), and Hetao Shenzhen-Hong Kong Science and Technology  Innovation Cooperation Zone Project (No. HZQSWS-KCCYB-2024016). The work of J. Yang is supported by the National Science Foundation of China (No.12271240, 
12426312), the fund of the Guangdong Provincial Key Laboratory of Computational Science and Material Design, China (No.2019B030301001), and the Shenzhen Natural Science Fund (RCJC20210609103819018).

\appendix

\section{Lie derivative}\label{appendix:lie derivative}
We employ the calculus of Lie derivative (see, e.g., \cite[Sect.III.5]{hairergeometric}, \cite[Sect.IX.1.4]{hundsdorfer2013numerical}, or \cite[Appendix.A]{koch2013error}). Consider an initial value problem of the form
\begin{equation}\label{eq:Lie_initial_problem}
    \begin{aligned}
        u^{\prime}(t)=F(u(t)), \quad 0 \leq t \leq T, \quad u(0)=u^0,
    \end{aligned}
\end{equation}
where the (unbounded nonlinear) operator $F: D(F) \subset X \rightarrow X$ is defined on a non-empty subset of the Banach space $\left(X,\|\cdot\|_{X}\right)$ and $u^0$ is the initial data. Formally, the exact solution to the evolutionary problem \eqref{eq:Lie_initial_problem} is given by
\begin{equation}
    \begin{aligned}
    u(t)=\mathcal{E}_{F}(t) u^0, \quad 0 \leq t \leq T,
    \end{aligned}
\end{equation}
with the evolution operator $\mathcal{E}_{F}$ depending on the actual time and acting on the initial value. The Lie derivative $D_{F}$ associated with $F$ is defined by the relation
\begin{equation}
    \begin{aligned}
D_{F} G (u^0)& =\left.\frac{\mathrm{d}}{\mathrm{~d} t}\right|_{t=0} G\left(\mathcal{E}_{F}(t) u^0\right)=G^{\prime}\left(u^0\right) F\left(u^0\right),
\end{aligned}
\end{equation}
for any (unbounded nonlinear) operator $G: D(G) \subset X \rightarrow X$, defined on a suitable domain, with $G'$ denoting the  Fr\'{e}chet-derivative of $G$, and the operator $\left(\mathrm{e}^{t D_{F}}\right)_{0 \leq t \leq T}$  is defined  by
\begin{equation}
    \begin{aligned}
\mathrm{e}^{t D_{F}} G(u^0)=G(\mathcal{E}_{F}(t) u^0), \quad 0 \leq t \leq T.
    \end{aligned}
\end{equation}
In particular, if $G= \mathcal{I}$ is the identity operator, we obtain $\mathrm{e}^{t D_{F}}\mathcal{I} (u^0)=\mathcal{E}_{F}(t)u^0$ and $D_{F}\mathcal{I} (u^0)=F(u^0)$. For brevity, we will omit the notation of $\mathcal{I}$ in subsequent expressions.

It is evident that the evolution operator forms a local one-parameter group
\begin{equation}
    \begin{aligned}
\mathrm{e}^{(t_1+t_2) D_{F}}=\mathrm{e}^{t_1 D_{F}} \mathrm{e}^{t_2 D_{F}}=\mathrm{e}^{t_2 D_{F}} \mathrm{e}^{t_1 D_{F}}, \quad 0 \leq t_1+t_2 \leq T,\left.\quad \mathrm{e}^{t_1 D_{F}}\right|_{t_1=0}=\mathcal{I}, 
    \end{aligned}
\end{equation}
since $\mathcal{E}_F(t_1+t_2) u^0 = \mathcal{E}_F(t_2)(\mathcal{E}_F(t_1) u^0)$ by the local existence and uniqueness of the solution and consequently $\mathrm{e}^{(t_1+t_2) D_F} G(u^0) = G\left(\mathcal{E}_F(t_1+t_2) u^0\right) = G\left(\mathcal{E}_F(t_2) (\mathcal{E}_F(t_1) u^0)\right) = \mathrm{e}^{t_1 D_F} \mathrm{e}^{t_2 D_F} G (u^0)$. It is also important to note that the composition of evolution operators acts in reverse order, i.e., the following holds
\begin{equation}
    \begin{aligned}
\mathrm{e}^{t_1 D_{F_{1}}} \mathrm{e}^{t_2 D_{F_{2}}} G (u^0)=G\left(\mathcal{E}_{F_2}(t_2)( \mathcal{E}_{F_1}(t) u^0)\right), \quad 0 \leq t_1+t_2 \leq T.
    \end{aligned}
\end{equation}
We omit the parentheses later for short. That is $G\left(\mathcal{E}_{F_2}(t_2)( \mathcal{E}_{F_1}(t_1) u^0)\right)=G\left(\mathcal{E}_{F_2}(t_2)\mathcal{E}_{F_1}(t_1) u^0\right)$.

Additionally, we have the rule
\begin{equation}
    \begin{aligned}
\frac{\mathrm{d}}{\mathrm{d} t} \mathrm{e}^{t D_{F}}G(u^0)=\mathrm{e}^{t D_{F}} D_{F}G(u^0)=D_{F} \mathrm{e}^{t D_{F}}G(u^0), \quad 0 \leq t \leq T.
    \end{aligned}
\end{equation}
We also note that the identity
\begin{equation}
    \begin{aligned}
    \mathrm{e}^{t D_{F}}=\mathcal{I}+\left.\mathrm{e}^{\sigma D_{F}}\right|_{\sigma=0} ^{t}=\mathcal{I}+\int_{0}^{t} \frac{\mathrm{~d}}{\mathrm{~d} \sigma} \mathrm{e}^{\sigma D_{F}} \mathrm{~d} \sigma=\mathcal{I}+\int_{0}^{t} \mathrm{e}^{\sigma D_{F}} D_{F} \mathrm{~d} \sigma, \quad 0 \leq t \leq T, 
    \end{aligned}
\end{equation}
which, as justified by the above considerations, implies the (Taylor-like) expansion
\begin{equation}\label{eq:e^tDF}
    \begin{aligned}
    \mathrm{e}^{t D_{F}}=\sum_{j=0}^{s-1} \frac{1}{j!} t^{j} D_{F}^{j}+\int_{T_{s}} \mathrm{e}^{\sigma_{s} D_{F}} D_{F}^{s} \mathrm{~d} \sigma, \quad 0 \leq t \leq T, \quad s \in \mathbb{N}^+,
    \end{aligned}
\end{equation}
where we denote $T_{s}=\left\{\sigma=\left(\sigma_{1}, \sigma_{2}, \ldots, \sigma_{s}\right) \in \mathbb{R}^{s}: 0 \leq \sigma_{s} \leq \ldots \leq \sigma_{1} \leq \sigma_{0}=t\right\}$ and, as common usage, set $D_{F}^{0}=\mathcal{I}$. For integers $j \geq 0$, the linear operators $\varphi_j(tD_F)$ are defined through
\begin{equation}\label{eq:phi}
    \begin{aligned}
        \varphi_0(tD_F) = \mathrm{e}^{tD_F},\quad\varphi_j(tD_F) = \frac{1}{(j-1)!}\int_0^1 \sigma^{j-1}\mathrm{e}^{(1-\sigma)tD_F}\mathrm{~d}\sigma,
    \end{aligned}
\end{equation}
with the recurrence relation
\begin{equation}\label{eq:phi_recurrence}
    \begin{aligned}
        \varphi_{j}\left(t D_{F}\right)=\frac{1}{j!} \mathcal{I}+\varphi_{j+1}\left(t D_{F}\right) t D_{F}, \quad j \geq 0.
    \end{aligned}
\end{equation}

The Lie-commutator of two nonlinear operators $F,G$ is given by $\operatorname{ad}_{F}(G)(v) = [F,G](v) = F'(v)G(v) - G'(v)F(v)$; in particular, when $F$ and $G$ are linear, the above relation reduces to $\operatorname{ad}_{F}(G) =FG-GF$. In accordance with the above definition, we further set $\operatorname{ad}_{D_{F}}\left(D_{G}\right)(v) = \left[D_{F}, D_{G}\right] (v)=D_{F} D_{G} (v)-D_{G} D_{F} (v)=G^{\prime}(v) F(v) - F^{\prime}(v) G(v)$. By definition, the commutator $[D_F , D_G] = -D_{[F,G]}.$
More generally, the iterated Lie-commutators are defined by induction
\begin{equation}\label{eq:ad}
    \begin{aligned}
\operatorname{ad}_{D_{F}}^{0}\left(D_{G}\right)=D_{G}, \quad \operatorname{ad}_{D_{F}}^{j}\left(D_{G}\right)=\left[D_{F}, \operatorname{ad}_{D_{F}}^{j-1}\left(D_{G}\right)\right], \quad j \geq 1, 
    \end{aligned}
\end{equation}
and 
\begin{equation}\label{eq:D_ad}
    \begin{aligned}
\operatorname{ad}_{D_{F}}^{j}\left(D_{G}\right)= (-1)^j D_{\operatorname{ad}_{{F}}^{j}\left({G}\right)}, \quad j \geq 1.
    \end{aligned}
\end{equation}

\section{Proof of \eqref{thm:exact_sol_bound}}\label{appendix:proof of thm}
Before the proof, we introduce the following lemma as a preparatory step.

 \begin{lemma}\label{lemma:D_alpha_exactu_uniform_bound}Suppose that the initial data $u^0 \in H^m_\text{per}(\Omega)$ for $m\geq 1$, the spatial dimension $d=1,2,3$, $f \in C^{m}(\mathbb{R})$, and \eqref{assumption:f'_bounded} is satisfied. Let $u(t)$ be the exact solution to \eqref{eq:semilinear equation} with periodic boundary conditions corresponding to the initial data $u^0$. Then, for any $T\geq 0$, it holds that 
\begin{equation}\label{eq:uni_bound}
    \quad\left\|\partial_x^{\alpha} u(t)\right\|_{2} \leq  \mathrm{e}^{\kappa t}\left(\left\|\partial_x^{\alpha} u^{0}\right\|_{2}+C\sqrt{t}\right),\quad 0\leq t\leq T,
\end{equation}
 where $C\geq 0 $ depends on $(\|u^0\|_{H^{m-1}(\Omega)},m,d,T,\kappa)$ and $\alpha$ is a multi-index with $|\alpha| = m$.
\end{lemma}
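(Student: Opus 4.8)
The plan is to prove \eqref{eq:uni_bound} by induction on the order $m=|\alpha|$, running $L^2$ energy estimates for $\partial_x^\alpha u$ in which the parabolic dissipation $-\|\nabla\partial_x^\alpha u\|_2^2$ absorbs the top-order part of the nonlinearity while the remaining, lower-order part is controlled by the inductive hypothesis. As a preliminary I would first record the $L^2$ bound: testing \eqref{eq:semilinear equation} with $u$ and using $u f(u)\le \kappa u^2+|f(0)|\,|u|$ (a consequence of \eqref{eq:f'_bounded}) together with Grönwall's lemma gives $\|u(t)\|_2\le \mathrm{e}^{\kappa t}\big(\|u^0\|_2+|\Omega|^{1/2}|f(0)|\,t\big)$. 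For $m=1$, differentiating \eqref{eq:semilinear equation} by a single $\partial_{x_i}$ and testing with $\partial_{x_i}u$ leaves exactly $\int_\Omega f'(u)(\partial_{x_i}u)^2\le\kappa\|\partial_{x_i}u\|_2^2$ (no remainder term occurs), so Grönwall yields $\|\partial_{x_i}u(t)\|_2\le \mathrm{e}^{\kappa t}\|\partial_{x_i}u^0\|_2$, i.e.\ \eqref{eq:uni_bound} with $C=0$. Together these show $M_1:=\sup_{0\le t\le T}\|u(t)\|_{H^1}<\infty$, which anchors the induction.

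For the inductive step, fix $m\ge 2$, assume \eqref{eq:uni_bound} holds for all lower orders, and set $M_{m-1}:=\sup_{0\le t\le T}\|u(t)\|_{H^{m-1}}<\infty$ (which then depends only on $\|u^0\|_{H^{m-1}},m,d,T,\kappa$ and $f$). For $|\alpha|=m$, differentiating \eqref{eq:semilinear equation} and testing with $\partial_x^\alpha u$ gives
\[
\tfrac12\frac{\mathrm{d}}{\mathrm{d}t}\|\partial_x^\alpha u\|_2^2+\|\nabla\partial_x^\alpha u\|_2^2=\int_\Omega \partial_x^\alpha u\,\partial_x^\alpha f(u)\,\mathrm{d}x .
\]
The decisive move is to integrate by parts in the nonlinear term: picking $i$ with $\alpha_i\ge 1$ and writing $\alpha=\beta+e_i$ with $e_i$ the $i$-th unit multi-index, periodicity yields $\int_\Omega \partial_x^\alpha u\,\partial_x^\alpha f(u)=-\int_\Omega \partial_{x_i}\partial_x^\alpha u\,\partial_x^\beta f(u)$, so that only the $(m-1)$-st order factor $\partial_x^\beta f(u)$ remains. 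By a Moser-type composition estimate obtained from the Gagliardo–Nirenberg interpolation inequalities — using $f\in C^{m}(\mathbb{R})$ and, when $m\ge 3$, the embedding $H^2_{\mathrm{per}}(\Omega)\hookrightarrow L^\infty(\Omega)$ (valid for $d\le 3$) to bound $\sup_{|\xi|\le\|u\|_\infty}|f^{(k)}(\xi)|$ through $M_{m-1}$ — one gets $\|\partial_x^\beta f(u)\|_2\le\|f(u)\|_{H^{m-1}}\le \Phi$ for some finite $\Phi=\Phi(M_{m-1},m,d,\kappa,f)$; for $m=2$ this is elementary, as $\partial_x^\beta f(u)=f'(u)\partial_{x_j}u$. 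Cauchy–Schwarz, Young's inequality, and $\|\partial_{x_i}\partial_x^\alpha u\|_2\le\|\nabla\partial_x^\alpha u\|_2$ then give $\int_\Omega\partial_x^\alpha u\,\partial_x^\alpha f(u)\le\tfrac12\|\nabla\partial_x^\alpha u\|_2^2+\tfrac12\Phi^2$, and absorbing the first term into the dissipation leaves $\frac{\mathrm{d}}{\mathrm{d}t}\|\partial_x^\alpha u\|_2^2\le\Phi^2$.

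Integrating in time gives $\|\partial_x^\alpha u(t)\|_2^2\le\|\partial_x^\alpha u^0\|_2^2+\Phi^2 t$, hence $\|\partial_x^\alpha u(t)\|_2\le\|\partial_x^\alpha u^0\|_2+\Phi\sqrt t\le \mathrm{e}^{\kappa t}\big(\|\partial_x^\alpha u^0\|_2+\Phi\sqrt t\big)$, which is \eqref{eq:uni_bound} with $C=\Phi$; summing over $|\alpha|\le m$ produces a finite $M_m$ and closes the induction. I expect the main obstacle to be the handling of $\partial_x^\alpha f(u)$: one must ensure that it costs at most one power of the top-order Sobolev norm, and the integration by parts above is exactly what achieves this — it reduces the nonlinearity to order $m-1$, which the inductive hypothesis already controls — together with the low-dimensional Gagliardo–Nirenberg/Moser estimates for $\|f(u)\|_{H^{m-1}}$. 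A secondary technical point is the rigorous justification of these formal energy identities (standard for semilinear parabolic problems with $f\in C^m(\mathbb{R})$ and $u^0\in H^m_{\mathrm{per}}(\Omega)$, e.g.\ via Galerkin truncation or parabolic mollification), which at the same time shows, through the derived a priori bounds, that no finite-time blow-up occurs and hence that \eqref{eq:uni_bound} holds on the whole interval $[0,T]$.
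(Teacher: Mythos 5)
Your proposal is correct and follows essentially the same route as the paper: base cases $|\alpha|=0,1$ via Grönwall, then for $|\alpha|=m\ge 2$ an induction in which one derivative is moved off $\partial_x^\alpha f(u)$ by integration by parts, the resulting $(m-1)$-order composition term is bounded through the inductive $H^{m-1}$ bound and Sobolev embeddings (the paper writes out the chain-rule expansion with $H^{m-1}\hookrightarrow W^{m-3,\infty}$ and $H^{m-1}\hookrightarrow W^{m-2,4}$ where you invoke the equivalent Moser/Gagliardo--Nirenberg composition estimate), and Young's inequality absorbs the top-order piece into the dissipation, yielding the $C\sqrt{t}$ growth after integration in time.
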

\begin{proof}
    We first prove the case when $|\alpha| = 0$. It is already known that
    \begin{equation}
        |f(u)| \leq |f(u) - f(0)|+|f(0)| \leq \kappa|u|+|f(0)|.
    \end{equation}
    Hence, we can see straight away that
\begin{equation}
\begin{aligned}
    2\|u\|_{2}\partial_{t}\|u\|_{2}& =\partial_{t}\|u\|_{2}^{2} =\partial_ t \int_{\Omega} u^{2} \dx=2\int_{\Omega} u_ t u \dx \\
    & =2 \int_{\Omega}\left(\Delta u+f(u)\right) u \dx 
    =-2\|\nabla u\|_{2}^{2}+ 2\int_{\Omega} uf(u) \dx \\
    &\leq 2\int_{\Omega}| uf(u)| \dx \leq  2\int_{\Omega}\big(\kappa|u^2|+|f(0)u|\big)\dx \leq 2\kappa\|u\|_{2}^2 + 2|f(0)|\|u\|_{1}\\
    & \leq 2\kappa\|u\|_{2}^2 +2\left|\Omega\right|^{\frac{1}{2}}|f(0)|\|u\|_{2},
\end{aligned}
\end{equation}
where the last inequality is obtained by the H{\"o}lder inequality.
Without loss of generality, we have
\begin{equation}
    \partial_t\|u\|_{2}\leq \kappa\|u\|_{2} + \left|\Omega\right|^{\frac{1}{2}}|f(0)|.
\end{equation}
By Gr{\"o}nwall's lemma, we obtain
$
    \|u(t)\|_{2} \leq \mathrm{e}^{\kappa t}(\left\|u^0\right\|_{2}+C_0 t),
$
where $C_0=\left|\Omega\right|^{\frac{1}{2}}|f(0)|$.

Next, we consider the case when $|\alpha| = 1$. We have
\begin{equation}
    \begin{aligned}
         \partial_t\left\|\partial_x^{\alpha} u\right\|_{2}^{2} & =2 \int_{\Omega} \partial_x^{\alpha} u \partial_x^{\alpha}\left(u_{t}\right) \dx=2 \int_{\Omega} \partial_x^{\alpha} u\left(\Delta \partial_x^{\alpha} u+\partial_x^{\alpha}f(u) \right)\dx \\
&=-2\left\|\nabla \partial_x^{\alpha} u\right\|_{2}^{2}+2\int_{\Omega} \partial_x^{\alpha} u\partial_x^{\alpha}f(u) \dx \leq 2\int_{\Omega} \partial_x^{\alpha} uf'(u) \partial_x^{\alpha} u \dx\\
&=2\int_{\Omega} |\partial_x^{\alpha} u|^2|f'(u)|  \dx \leq  2 \kappa \left\|\partial_x^{\alpha} u\right\|_{2}^{2}.
    \end{aligned}
\end{equation}
Again, by Gr{\"o}nwall's lemma, we obtain
\begin{equation}\label{eq:uni_bound_alpha=1}
    \left\|\partial_x^\alpha u(t)\right\|_{2}\leq \mathrm{e}^{\kappa t}\left\|\partial_x^\alpha u^0\right\|_{2}.
\end{equation}

For $|\alpha| \geq 2$, we suppose $\partial_x^{\alpha}u=\partial^{\alpha_1}_{x}\partial_x^{\widetilde{\alpha}}u$, where $|\alpha_1|=1$ and $\widetilde{\alpha} = \alpha-\alpha_1$. Then
\begin{equation}\label{eq:uni_bound_process_alpha=k}
    \begin{aligned}
         \partial_{t}\left\|\partial_x^{\alpha} u\right\|_{2}^2 =&-2\left\|\nabla \partial_x^{\alpha} u\right\|_{2}^{2}+2\int_{\Omega} \partial_x^{\alpha} u\partial_x^{\alpha}f(u) \dx\\
         =& -2\left\|\nabla \partial_x^{\alpha} u\right\|_{2}^{2} -2\int_{\Omega} \partial_x^{\widetilde{\alpha}}f(u) \partial^{\alpha_1}_{x}\partial_x^\alpha u \dx
         \\
         \leq & -2\left\|\nabla \partial_x^{\alpha} u\right\|_{2}^{2} +2\left\|\partial^{\alpha_1}_{x}\partial_x^\alpha u\right\|_{2}^{2}+\frac{1}{2}\left\|\partial_x^{\widetilde{\alpha}}f(u)\right\|_{2}^{2}\leq \frac{1}{2}\left\|\partial_x^{\widetilde{\alpha}}f(u)\right\|_{2}^{2}.
    \end{aligned}
\end{equation}
We only need to estimate $\|\partial_x^{\widetilde{\alpha}}f(u)\|_{2}$. We have
\begin{equation}
    \begin{aligned}
\partial_x^{\widetilde{\alpha}}f(u)= |\widetilde{\alpha}|f'(u)\partial_x^{\widetilde{\alpha}}u + \sum_{2\leq l\leq|\widetilde{\alpha}|}f^{(l)}(u)\sum_{\substack{\mu_{1}+\mu_{2}+\cdots+\mu_{l}=\widetilde{\alpha} \\
 1\leq|\mu_{1}|,| \mu_{2}|,\cdots,|\mu_{l}|\leq|\widetilde{\alpha}|-1}}   \partial_x^{\mu_1}u\partial_x^{\mu_2}u\cdots \partial_x^{\mu_l}u.
    \end{aligned}
\end{equation}

If $|\alpha|=2$, then $|\widetilde{\alpha}|=1$. Hence, by \eqref{eq:uni_bound_alpha=1}, we have
\begin{equation}
\left\|\partial_x^{\widetilde{\alpha}}f(u)\right\|_{2}=\left\|f'(u)\partial_x^{\widetilde{\alpha}}u\right\|_{2}\leq \kappa \left\|\partial_x^{\widetilde{\alpha}}u\right\|_{2}\leq \kappa \mathrm{e}^{\kappa T}\left\|u^0\right\|_{H^1}.
\end{equation}
Therefore, from \eqref{eq:uni_bound_process_alpha=k}, we have
\begin{equation}
    \partial_{t}\left\|\partial_x^{\alpha} u\right\|_{2}^2\leq \frac{\kappa^2\mathrm{e}^{2\kappa T}}{2}\|u^0\|_{H^1}^2,
\end{equation}
which implies
\begin{equation}
    \left\|\partial_x^\alpha u(t)\right\|_{2}\leq\sqrt{\left\|\partial_x^\alpha u^0\right\|_{2}^2+\frac{\kappa^2\mathrm{e}^{2\kappa T}}{2}\left\|u^0\right\|_{H^1}^2t}\leq \left\|\partial_x^\alpha u^0\right\|_{2} + C\sqrt{t},
\end{equation}
where $C=\kappa \mathrm{e}^{\kappa T}\|u^0\|_{H^1}$. So \eqref{eq:uni_bound} is verified for $|\alpha|\leq 2$. 

Now, we suppose \eqref{eq:uni_bound} holds for $|\alpha|\leq m-1$ with $m\geq 3$. Then we have
\begin{equation}
    \underset{0\leq t\leq T}{\sup}\left\|u(t)\right\|_{H^{m-1}}\leq  M_{m-1},
\end{equation}
where $M_{m-1}$ is a constant that depends on $(\|u^0\|_{H^{m-1}},m-1,d,T,\kappa)$.

When $|\alpha|=m$, we have
\begin{equation}
    \begin{aligned}
    \left\|\partial_x^{\widetilde{\alpha}}f(u)\right\|_{2} \leq |\widetilde{\alpha}|\left\|f'(u)\partial_x^{\widetilde{\alpha}}u\right\|_{2} + \sum_{l}\sum_{r}  \left\|f^{(l)}(u)\right\|_{\infty}\left\|\partial_x^{\mu_1}u\partial_x^{\mu_2}u\cdots \partial_x^{\mu_l}u\right\|_{2},
    \end{aligned}
\end{equation}
where $|\mu_i|\leq m-2$. Since $m\geq 3$ and $|\mu|=m-1$, there are at most 2 derivatives $\partial_x^{\mu_i}u$ with order $|\mu_i|=m-2$. Without loss of generality, we suppose $\partial_x^{\mu_1}u$ and $\partial_x^{\mu_2}u$ are the top two derivatives in the order. Then by Sobolev embedding theorem, $H_\text{per}^{m-1}(\Omega)\hookrightarrow W_\text{per}^{m-3,\infty}(\Omega)$, $H_\text{per}^{m-1}(\Omega)\hookrightarrow W_\text{per}^{m-2,4}(\Omega)$, so we have
\begin{equation}\label{eq:uni_bound_alpha=k_estimate_Du}
\begin{aligned}
\left\|f'(u)\partial_x^{\widetilde{\alpha}}u\right\|_{2}&\leq\kappa\left\|\partial_x^{\widetilde{\alpha}}u\right\|_{2}\leq \kappa \|u\|_{H^{m-1}}\leq \kappa M_{m-1},\\
    \left\|\partial_x^{\mu_1}u\partial_x^{\mu_2}u\cdots \partial_x^{\mu_l}u\right\|_{2}&\leq \left\|\partial_x^{\mu_1}u\right\|_{4}\left\|\partial_x^{\mu_2}u\right\|_{4}\left\|\partial_x^{\mu_3}u\right\|_{\infty}\cdots\left\|\partial_x^{\mu_l}u\right\|_{\infty}\leq \bar{C}\|u\|^l_{H^{m-1}}\leq \bar{C} M_{m-1}^l,
\end{aligned}
\end{equation}
where $\bar{C}$ is a constant.

Moreover, since $\|u\|_{\infty}$ is bounded by $\|u\|_{H^{2}}$ and $f^{(l)}(u)$ is bounded on bounded domain, so we have
\begin{equation}\label{eq:uni_bound_alpha=k_estimate_f^l}
    \left\|f^{(l)}(u)\right\|_{\infty}\leq \bar{C}_l,
\end{equation}
where $\bar{C}_l$ is a constant that depends on $\|u^0\|_{H^{2}}$. 

Now, let us return to \eqref{eq:uni_bound_process_alpha=k}, by \eqref{eq:uni_bound_alpha=k_estimate_Du} and \eqref{eq:uni_bound_alpha=k_estimate_f^l},  
we know that there exists a constant $C_m$ that depends on $(\|u^0\|_{H^{m-1}},k,d,T,\kappa)$ such that
\begin{equation}
    \partial_{t}\left\|\partial_x^{\alpha} u\right\|_{2}^2\leq \frac{1}{2}\left\|\partial_x^{\widetilde{\alpha}}f(u)\right\|_{2}^{2}\leq C_m^2,
\end{equation}
which implies
\begin{equation}
    \left\|\partial_x^{\alpha} u(t)\right\|_{2}\leq \sqrt{\left\|\partial_x^{\alpha} u^0\right\|_{2}^2+C_m^2 t}\leq\left\|\partial_x^{\alpha} u^0\right\|_{2} + C_m\sqrt{t}.
\end{equation}
By induction on $|\alpha|$, we complete the proof.
\end{proof}

Then the proof of \eqref{thm:exact_sol_bound} can be finished as follows. 

\begin{proof}
The results of $H^m$-norm bounds are trivial from Lemma \ref{lemma:D_alpha_exactu_uniform_bound} by summation. We only need to prove the $L^p$-norm bounds of the exact solution.
Firstly, we consider the case of $p\in[1,\infty)$. Similarly, we obtain
\begin{equation}
\begin{aligned}
    p\|u\|^{p-1}_{p}\partial_{t}\|u\|_{p}& =\partial_{t}\|u\|_{p}^{p} =\partial_ t \int_{\Omega} |u|^{p} \dx=p\int_{\Omega} u|u|^{p-2}u_ t  \dx \\
    & =p \int_{\Omega}u|u|^{p-2}\left(\Delta u+f(u)\right)  \dx 
    = p \int_{\Omega}-\nabla\left(u|u|^{p-2}\right)\nabla u  \dx + u|u|^{p-2}f(u)  \dx\\
    &=-p(p-1)\int_{\Omega} |u|^{p-2}|\nabla u|^2 +  p\int_{\Omega}u|u|^{p-2}f(u) \dx\\
    &\leq p\int_{\Omega}u|u|^{p-2}f(u) \dx \leq  p\int_{\Omega}|u|^{p-1}\big(\kappa|u|+|f(0)|\big)\dx \\
    & \leq p\kappa\|u\|_{p}^p +p|f(0)|\left|\Omega\right|^{\frac{1}{p}}\|u\|^{p-1}_{p},
\end{aligned}
\end{equation}
where the last inequality is obtained by the H{\"o}lder inequality. This yields that 
\begin{equation}
    \begin{aligned}
        \partial_{t}\|u\|_{p}\leq \kappa\|u\|_{p} +|f(0)|\left|\Omega\right|^{\frac{1}{p}}. 
    \end{aligned}
\end{equation}
By Gr{\"o}nwall's lemma, we obtain
\begin{equation}
    \|u(t)\|_{p} \leq \mathrm{e}^{\kappa t}\left(\left\|u^0\right\|_{p}+\left|\Omega\right|^{\frac{1}{p}}|f(0)| t\right) \leq \mathrm{e}^{\kappa t}\left(\left\|u^0\right\|_{p}+C\right),
\end{equation}
where $C=\left|\Omega\right|^{\frac{1}{p}}|f(0)|T$.

 Regarding the case of $p=\infty$, we use the Duhamel's principle
 \begin{equation}
     \begin{aligned}
         u(t) = \mathrm{e}^{t\Delta}u^0 + \int_{0}^t \mathrm{e}^{(t-s)\Delta}f(u(s)) \mathrm{~d} s.
     \end{aligned}
 \end{equation}
Hence, we can obtain that
 \begin{equation}
     \begin{aligned}
         \|u(t)\|_\infty &\leq \|\mathrm{e}^{t\Delta}u^0\|_\infty + \int_{0}^t \|\mathrm{e}^{(t-s)\Delta}f(u(s))\|_\infty \mathrm{~d} s\\
         & \leq \|u^0\|_\infty + \int_{0}^t \|f(u(s))\|_\infty \mathrm{~d} s \leq  \|u^0\|_\infty + |f(0)|t +\int_{0}^t\kappa\| u(s)\|_\infty \mathrm{~d} s,
     \end{aligned}
 \end{equation}
yielding
\begin{equation}
    \|u(t)\|_\infty \leq \mathrm{e}^{\kappa t}\left(\left\|u^0\right\|_{\infty}+|f(0)|t\right),
\end{equation}
where the stability of $\mathcal{E}_A(t)$ in Lemma \ref{lemma:E_A_stability} and the Gr{\"o}nwall's lemma are employed.
\end{proof}

\bibliographystyle{siamplain}
\bibliography{references}
\end{document}